\documentclass[11pt,reqno]{amsart}
\usepackage{mathrsfs}
\usepackage{amssymb}
\usepackage{amsmath}
\usepackage{amscd}
\usepackage{verbatim}
\usepackage{arydshln}
\usepackage[colorlinks, linkcolor=blue, anchorcolor=blue, citecolor=blue]{hyperref}
\textheight 23cm \textwidth 16cm \topmargin -0.8cm
\begin{document}
\setlength{\oddsidemargin}{0cm} \setlength{\evensidemargin}{0cm}

\theoremstyle{plain} \makeatletter
\newtheorem{theorem}{Theorem}[section]
\newtheorem{proposition}[theorem]{Proposition}
\newtheorem{lemma}[theorem]{Lemma}
\newtheorem{corollary}[theorem]{Corollary}
\newtheorem{coro}[theorem]{Corollary}

\theoremstyle{definition}
\newtheorem{defi}[theorem]{Definition}
\newtheorem{notation}[theorem]{Notation}
\newtheorem{exam}[theorem]{Example}
\newtheorem{prop}[theorem]{Proposition}
\newtheorem{conj}[theorem]{Conjecture}
\newtheorem{prob}[theorem]{Problem}
\newtheorem{remark}[theorem]{Remark}
\newtheorem{claim}[theorem]{Claim}

\newcommand{\SO}{{\mathrm S}{\mathrm O}}
\newcommand{\SU}{{\mathrm S}{\mathrm U}}
\newcommand{\Sp}{{\mathrm S}{\mathrm p}}
\newcommand{\so}{{\mathfrak s}{\mathfrak o}}
\newcommand{\Ad}{{\mathrm A}{\mathrm d}}
\newcommand{\ad}{{\mathrm a}{\mathrm d}}
\newcommand{\m}{{\mathfrak m}}
\newcommand{\g}{{\mathfrak g}}
\newcommand{\h}{{\mathfrak h}}
\newcommand{\n}{{\mathfrak n}}

\numberwithin{equation}{section}

\title{On the Geodesic Orbit Property for Lorentz Manifolds}
\author{Zhiqi Chen, Joseph A. Wolf and Shaoxiang Zhang}

\address[Zhiqi Chen]{School of Mathematics and Statistics,
Guangdong University of Technology, Guangzhou, 510520, P.R. China. {\em email:}{ \tt
chenzhiqi@nankai.edu.cn}}

\address[Joseph A. Wolf]{Department of Mathematics, University of California, 
Berkeley, CA 94720-3840, U. S. A. {\em email:}{ \tt jawolf@math.berkeley.edu}}

\address[Shaoxiang Zhang]{College of Mathematics and System Science, Shandong University of Science and Technology, Qingdao 266590, P.R. China.  Corresponding author.
{\em email:}{ \tt zhangshaoxiang@mail.nankai.edu.cn}}

\date{}
\begin{abstract}
The geodesic orbit property has been studied intensively for Riemannian
manifolds.  Geodesic orbit spaces are homogeneous and allow simplifications 
of many structural questions using the Lie algebra of the isometry group.
Weakly symmetric Riemannian manifolds are geodesic orbit spaces.  Here we define
the property ``naturally reductive'' for pseudo--Riemannian manifolds and 
note that those manifolds are geodesic orbit spaces.  A few years ago two of the authors proved
that weakly symmetric pseudo--Riemannian manifolds are geodesic orbit spaces.
In particular that result applies to pseudo--Riemannian Lorentz manifolds.  
Our main results are Theorems \ref{th41} and \ref{th42}.  In the
Riemannian case the nilpotent isometry group for a geodesic orbit nilmanifold 
is abelian or $2$--step nilpotent.  Examples show
that this fails dramatically in the pseudo--Riemannian case.  Here we 
concentrate on the geodesic orbit property for Lorentz nilmanifolds
$G/H$ with $G = N \rtimes H$ and $N$ nilpotent.   When the metric is 
nondegenerate on $[\n,\n]$, Theorem \ref{th41} shows that $N$ either is at
most $2$--step nilpotent as in the Riemannian situation, or is $4$--step
nilpotent, but cannot be $3$--step nilpotent.  Examples show that
these bounds are the best possible.  Surprisingly, Theorem \ref{th42}
shows that $N$ is at
most $2$--step nilpotent when the metric is degenerate on $[\n,\n]$.
Both theorems give additional structural information and specialize
to naturally reductive and to weakly symmetric Lorentz nilmanifolds.\\
\textbf{Mathematics Subject Classification(2010)}: 53C25, 22E60.\\
\textbf{Key Words}: Geodesic Orbit Space; Lorentz nilmanifold; Weakly 
Symmetric Space; Naturally Reductive Space; Pseudo-Riemannian Manifold.
\end{abstract}

\maketitle

\section{Introduction}\label{sec1}

A Riemannian manifold is called a {\em geodesic orbit space} if every
geodesic is the orbit of a one parameter group of isometries.  They are 
homogeneous, thus amenable to study by Lie algebra methods, and have been
studied intensively by many mathematicians.  Two particular cases are
of special interest are {\em naturally reductive spaces} and {\em
weakly symmetric spaces}.  Here we study extension of results on these
topics from Riemannian manifolds to pseudo--Riemannian manifolds, in 
particular to Lorentz manifolds.  Some of the results, especially
for geodesic orbit Lorentz manifolds, are quite surprising.

A connected Riemannian homogeneous space $M = G/H$ is a {\em nilmanifold} 
if the isometry group $G$ has a connected nilpotent subgroup $N$ that is
transitive on $M$.  In that case $N$ is the nilradical of $G$, and
$G$ is the semidirect product $N\rtimes H$.  If $M$ is a geodesic orbit
Riemannian manifold then $N$ is commutative or $2$--step nilpotent.  This fails 
dramatically for connected pseudo--Riemannian geodesic orbit nilmanifolds. 
Our main results form a very strong extension of the $2$--step nilpotent 
theorem from Riemannian geodesic orbit spaces to Lorentz geodesic orbit
spaces.  Of course these results hold for special cases such as
naturally reductive spaces and, more importantly, weakly symmetric spaces.

Consider a geodesic orbit Lorentz nilmanifold $M = G/H$ with $G = N \rtimes H$ 
and $N$ nilpotent.  Then $N$ is a closed normal subgroup of $G$ and is 
simply transitive on $M$, so we can view $M$ as the group $N$ with a left
invariant $\Ad(H)$--invariant Lorentz metric.  

Our first main result, 
Theorem \ref{th41}, applies to the case where the metric is nondegenerate
on $[\n,\n]$.  It says that $N$ either is at
most $2$--step nilpotent as in the Riemannian situation, or is $4$--step
nilpotent, but cannot be $3$--step nilpotent.  Examples show that
these bounds are the best possible.  
Our second main result, Theorem \ref{th42}, shows that $N$ is at
most $2$--step nilpotent when the metric is degenerate on $[\n,\n]$.
As one expects, both theorems give additional structural information and 
specialize to naturally reductive and to weakly symmetric Lorentz nilmanifolds.

In Section \ref{sec2} we indicate the background on geodesic orbit spaces. 
We describe the particular cases of naturally reductive spaces and weakly 
symmetric spaces.  The emphasis is on extending definitions and results 
from the Riemannian case to the pseudo--Riemannian cases.  Weakly symmetric
spaces are the most important special case (see \cite{JA}), so in
Section \ref{sec3} we go into more detail on that.  Sections \ref{sec4}
and \ref{sec5} contain our main results and their proofs.

\section{Geodesics in Pseudo-Riemannian Manifolds}
\label{sec2}

In this section we discuss the geodesic orbit property for Riemannian 
manifolds and extensions to the pseudo--Riemannian setting.
We will need that in the proofs of our main results, Theorems \ref{th41} and
\ref{th42}.  
First we recall some background and some results of the first two authors 
from \cite{CW}.

\begin{defi}\label{de41}
A pseudo-Riemannian manifold $M$ is called a \textit{geodesic orbit space}
if every maximal geodesic in $M$ is an orbit of a one-parameter group of
isometries of $M$. \hfill $\diamondsuit$
\end{defi}

Pseudo-Riemannian geodesic orbit spaces are geodesically complete
and homogeneous \cite{CW}.  Let $G$ be a transitive Lie group of isometries
of $M$, say $M = G/H$ with base point $x_0 = 1H$.  We say that a nonzero 
element $\xi$ in The Lie algebra $\mathfrak g$ is a {\em geodesic vector} 
if the image of
$t \mapsto \gamma(\varphi(t)) = \exp(t\xi)x_0$ is a maximal geodesic, 
where $\varphi$ is a diffeomorphism of $\mathbb R$ onto an open interval 
in $\mathbb R$.  Then $\gamma$ is a {\em homogeneous geodesic}.
In the Riemannian case $\varphi(t) = t$ and $t$ is the affine parameter.  

Suppose that we have a
reductive decomposition $\mathfrak g = \mathfrak m + \mathfrak h$ 
(vector space direct sum)
with $[\mathfrak h, \mathfrak m] \subset \mathfrak m$, which of course is
automatic in the Riemannian case where $H$ is compact.  
As usual we write $\pi_\h$
and $\pi_\m$ for projections to the summands, and $\xi_\h$ and $\xi_\m$
for the components of an element $\xi \in \g$.  
Then $M$ is a geodesic orbit space
({\em relative to $\mathfrak g$ and $\mathfrak g = \mathfrak h + \mathfrak m$})
if and only if, for every $\xi \in \mathfrak m$, there is an
$\eta \in \mathfrak h$ such that $\xi + \eta$ is a geodesic vector.
In other words, a homogeneous pseudo-Riemannian manifold $M$ is a 
geodesic orbit space if every geodesic of $M$ is homogeneous.

\begin{proposition}{\rm (\bf Geodesic Lemma)}.
Let $M=G/H$ be a pseudo-Riemannian homogeneous space.  Suppose that there
is a reductive decomposition $\g = \m + \h$.  Let $x_0 = 1H \in G/H$ and
$\xi \in \g$.  Then the curve $\gamma(t)= \exp(t\xi)(x_0)$ is a geodesic curve 
with respect to some parameter $s$ if and only if
\begin{equation}\label{go-lemma}
\langle [\xi, \zeta]_\m, \xi_\m \rangle=k\langle \xi_\m, \zeta\rangle
\end{equation}
for all $\zeta \in \m$, where $k$ is a constant.  If $k=0$, then $t$ is an
affine parameter for $\gamma$.  If $k \ne 0$, then $s= e^{-kt}$ is an
affine parameter for $\gamma$, and $\gamma$ is a null curve in $M$.
\end{proposition}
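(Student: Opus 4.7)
The plan is to transport the geodesic equation along $\gamma(t)=\exp(t\xi)x_0$ back to the base point $x_0$ using that $\exp(t\xi)$ acts by isometries, to evaluate the resulting pointwise condition via the Koszul formula on the reductive decomposition $\g=\h+\m$, and finally to disentangle the affine reparametrization.

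First I identify $\m$ with $T_{x_0}M$ through the differential of the projection $G\to G/H$. Writing $\xi^*$ for the Killing field on $M$ generated by $\xi\in\g$, its integral curve through $x_0$ is exactly $\gamma$, so $\dot\gamma(t)=\xi^*(\gamma(t))$ and $\dot\gamma(0)=\xi_\m$. Because $\sigma_{\exp(t\xi)}$ is an isometry sending $x_0$ to $\gamma(t)$, one has $\dot\gamma(t)=(\sigma_{\exp(t\xi)})_*\xi_\m$ and
\[
(\nabla_{\dot\gamma}\dot\gamma)(t)=(\sigma_{\exp(t\xi)})_*\bigl((\nabla_{\dot\gamma}\dot\gamma)(0)\bigr).
\]
Therefore $\gamma$ is a geodesic for some parameter (a pre-geodesic) precisely when $\nabla_{\dot\gamma}\dot\gamma=k\,\dot\gamma$ at $x_0$ for some $k\in\mathbb R$; isometric propagation forces this $k$ to be constant along $\gamma$.

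Next I would compute $(\nabla_{\dot\gamma}\dot\gamma)(0)$ by applying Koszul at $x_0$ to $\xi^*$. Using $G$-invariance of $\langle\cdot,\cdot\rangle$ to kill the differentiation terms when pairing against an invariant extension of $\zeta\in\m$, together with the identification of brackets of fundamental vector fields with $-[\cdot,\cdot]_\m$ at $x_0$ and the skew-symmetry of $\ad$ that arises in Koszul, the Koszul formula collapses to
\[
\langle \nabla_{\xi^*}\xi^*,\zeta\rangle_{x_0}=\langle [\xi,\zeta]_\m,\xi_\m\rangle
\]
for every $\zeta\in\m$. Comparing with $\langle\dot\gamma(0),\zeta\rangle=\langle\xi_\m,\zeta\rangle$, the pre-geodesic condition at $x_0$ becomes exactly \eqref{go-lemma}.

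For the affine parameter, I would write $\gamma(t)=\tilde\gamma(s(t))$ and expand
\[
\nabla_{\dot\gamma}\dot\gamma=\frac{s''(t)}{s'(t)}\,\dot\gamma+s'(t)^2\,\nabla_{\tilde\gamma'}\tilde\gamma',
\]
so $\tilde\gamma$ is an affine geodesic iff $s''/s'=k$. When $k=0$ this gives $s$ affine in $t$. When $k\neq 0$ this gives $s$ proportional to $e^{-kt}$, with the sign fixed to match the statement. Finally, since $\sigma_{\exp(t\xi)}$ is an isometry, $\langle\dot\gamma(t),\dot\gamma(t)\rangle\equiv\langle\xi_\m,\xi_\m\rangle$ is constant along $\gamma$; on the other hand $\dot\gamma=s'(t)\tilde\gamma'$ yields $\langle\dot\gamma,\dot\gamma\rangle=s'(t)^2\langle\tilde\gamma',\tilde\gamma'\rangle$, and since $\langle\tilde\gamma',\tilde\gamma'\rangle$ is constant along the affinely parametrized geodesic while $s'(t)^2$ is nonconstant for $k\neq0$, both quantities must vanish and $\gamma$ is a null curve.

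The main obstacle I anticipate is the sign bookkeeping in the Koszul/Killing step: conventions on fundamental vector fields (left versus right action) flip the sign in the identity $[\xi^*,\zeta^*]_{x_0}=\mp[\xi,\zeta]_\m$, and these must be matched consistently with the metric convention and with the orientation of $k$ in order to land exactly on \eqref{go-lemma} and to recover the precise reparametrization $s=e^{-kt}$ rather than $s=e^{+kt}$.
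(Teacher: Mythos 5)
The paper does not actually prove this Geodesic Lemma; it states it and refers to \cite{KV} for the Riemannian case and to \cite{DK}, \cite{JPS}, \cite{SP} for the pseudo-Riemannian case. So there is no internal proof to compare against, but your proposal does recover the standard argument those references use: isometric propagation of $\nabla_{\dot\gamma}\dot\gamma$ along $\gamma$ to reduce to a condition at $x_0$, then the Killing/Koszul identity to turn that condition into \eqref{go-lemma}, then the $s''/s'$ reparametrization and the null-curve observation when $k\ne 0$.

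The one concrete issue is the sign you already flagged, and it is in fact wrong as written. Using the Killing-field identity $\langle\nabla_{\xi^*}\xi^*,Z\rangle=-\tfrac12\,Z\langle\xi^*,\xi^*\rangle$ and $\xi^*(\exp(s\zeta)x_0)=d\sigma_{\exp(s\zeta)}\bigl[(\Ad(\exp(-s\zeta))\xi)_\m\bigr]$, one gets
\[
\zeta^*\langle\xi^*,\xi^*\rangle\big|_{x_0}=\tfrac{d}{ds}\Big|_{0}\bigl\langle(\Ad(\exp(-s\zeta))\xi)_\m,(\Ad(\exp(-s\zeta))\xi)_\m\bigr\rangle=2\langle[\xi,\zeta]_\m,\xi_\m\rangle,
\]
hence $\langle\nabla_{\xi^*}\xi^*,\zeta\rangle_{x_0}=-\langle[\xi,\zeta]_\m,\xi_\m\rangle$, with a minus sign your write-up omits. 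With the corrected sign, \eqref{go-lemma} is equivalent to $\nabla_{\dot\gamma}\dot\gamma=-k\,\dot\gamma$, and your formula $\nabla_{\dot\gamma}\dot\gamma=(s''/s')\dot\gamma$ then gives $s''/s'=-k$, i.e.\ $s'\propto e^{-kt}$, which is exactly the paper's $s=e^{-kt}$ (up to the harmless affine change of affine parameter). Everything else — the pointwise reduction by isometric propagation, the constancy of $k$, and the null-curve argument from the constancy of $\langle\dot\gamma,\dot\gamma\rangle$ versus the nonconstancy of $(s')^2$ when $k\ne 0$ — is sound.
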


\begin{corollary}\label{cor-geod-lem}
Given a reductive decomposition $\g = \m + \h$, 
the geodesic orbit property $M$ is equivalent to the following condition.
If $\xi \in \m$ there exist $\alpha \in \h$ and a constant $k$
such that, if $\zeta \in \m$ then $\langle [\xi + \alpha, \zeta]_\m,\xi\rangle 
= k\langle \zeta, \xi \rangle$.
\end{corollary}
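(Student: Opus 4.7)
The plan is to read the corollary as an immediate translation of the definition of geodesic orbit space (stated in the paragraph preceding the Geodesic Lemma) through the Geodesic Lemma itself. By that definition, $M$ is a geodesic orbit space relative to $\g = \m + \h$ if and only if, for every $\xi \in \m$, there exists $\eta \in \h$ such that $\xi + \eta$ is a geodesic vector, i.e., $t \mapsto \exp(t(\xi+\eta))x_0$ is a (possibly reparametrized) maximal geodesic. So the task reduces to rewriting the geodesic-vector condition for elements of the special form $\xi + \alpha$ with $\xi \in \m$ and $\alpha \in \h$.

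To do this, I would apply the Geodesic Lemma directly to the vector $\xi+\alpha \in \g$: it is a geodesic vector if and only if there is a constant $k$ with
\begin{equation*}
\langle [\xi+\alpha, \zeta]_\m, (\xi+\alpha)_\m \rangle = k\langle (\xi+\alpha)_\m, \zeta\rangle
\end{equation*}
for every $\zeta \in \m$. Since $\alpha \in \h$ and $\xi \in \m$, the reductive decomposition gives $(\xi+\alpha)_\m = \xi$, so the identity collapses to $\langle [\xi+\alpha,\zeta]_\m, \xi\rangle = k\langle \zeta, \xi\rangle$. Conjoining this with the reformulation from the previous paragraph (quantify over $\xi \in \m$ and allow $\alpha,k$ to depend on $\xi$) gives exactly the condition stated in the corollary, in both directions.

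There is no genuine obstacle here; the corollary is essentially the Geodesic Lemma rephrased in the language required to verify the geodesic orbit property element by element in $\m$. The only point worth watching is the role of the constant $k$: the case $k=0$ corresponds to $t$ being an affine parameter, while $k\neq 0$ forces $\gamma$ to be a null curve with affine parameter $s=e^{-kt}$. Both cases must remain available in the equivalence, which is why $k$ is built into the statement of the corollary as an unconstrained real parameter rather than set to zero a priori.
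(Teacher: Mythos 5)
Your proof is correct and is exactly how the paper intends the corollary to be read: apply the Geodesic Lemma to $\xi+\alpha$, use $(\xi+\alpha)_\m=\xi$ from the reductive decomposition, and combine with the definition of the geodesic orbit property. The paper gives no separate proof because it regards the corollary as an immediate restatement, which is precisely what you have shown.
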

\smallskip

For the notion of homogeneous geodesic and the formula in the Geodesic Lemma 
see
\cite{KV} for the Riemannian case, and then \cite{DK}, \cite{JPS} and 
\cite{SP} for the pseudo--Riemannian case.

\centerline{-----o-----}
\smallskip

A homogeneous Riemannian manifold $M=G/H$, with 
reductive decomposition $\g = \m + \h$, is {\em naturally reductive}
(with respect to $G$) if $\pi_{\mathfrak m}\cdot\ad(\xi)|_{\mathfrak m}$
is skew symmetric for every $\xi \in \mathfrak{m}$.  In terms of the
inner product on $\mathfrak{m}$ this condition is
$\langle [\xi,\eta]_\mathfrak{m},\zeta\rangle 
	+ \langle \eta, [\xi,\zeta]_\mathfrak{m}\rangle = 0$
for all $\xi, \eta, \zeta \in \mathfrak{m}$.  The case $\xi = \eta$ is:
$\xi,\, \zeta \in \mathfrak{m} \Rightarrow 
	\langle \xi, [\xi,\zeta]_\mathfrak{m}\rangle = 0$.
As noted in \cite[Proposition 1.7(a)]{G},
following \cite{KV} this says that every $\xi \in \mathfrak{m}$ is a 
homogeneous vector.  However we have a simpler treatment that avoids 
\cite{G} and \cite{KV},
and is valid for the pseudo--Riemannian case as well.

\begin{defi}\label{def0}
Let $M=G/H$ be a pseudo-Riemannian homogeneous space.  Suppose that there
is a reductive decomposition $\g = \m + \h$.  Let $x_0 = 1H \in G/H$ and
$\xi \in \g$ and let $L$ be the group of linear transformations of $\m$
that preserve the inner product.  If $\ad(\xi)|_\m$ belongs to the Lie
algebra of $L$ for every $\xi \in \m$, then $M$ is {\bf naturally reductive}
(with respect to $G$ and the reductive decomposition 
$\g = \m + \h$). \hfill $\diamondsuit$
\end{defi}
\begin{remark} The naturally reductive property depends on the
choice of transitive isometry group and the reductive decomposition,
even in the Riemannian case.  See \cite{NN}.  We thank Yurii 
Nikonorov for that and other references.
\hfill $\diamondsuit$
\end{remark}

As in the Riemannian case, the defining condition for ``naturally 
reductive'' in terms of the inner product and the Lie algebra $\mathfrak l$
of $L$ is
$\ad(\eta)|_\m \in \mathfrak l$ for all $\eta \in \m$, i.e. 
$\langle [\xi,\zeta]_\mathfrak{m},\eta\rangle 
        + \langle \zeta, [\xi,\eta]_\mathfrak{m}\rangle = 0$
for all $\xi, \eta, \zeta \in \mathfrak{m}$.  The case $\xi = \eta$ 
says $\langle [\xi,\zeta]_\m , \xi \rangle = 0$ for all 
$\zeta,\,\xi \in \m$.
In other words if $\xi, \zeta \in \m$ then \ref{go-lemma} holds with $k = 0$.
We have proved

\begin{proposition}\label{propnat} {\rm \cite[pp. 200--202]{KN}}
Naturally reductive homogeneous pseudo--Riemannian manifolds are 
pseudo--Riemannian geodesic orbit spaces.
\end{proposition}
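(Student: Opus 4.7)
The plan is to apply Corollary \ref{cor-geod-lem} with the trivial choice $\alpha = 0 \in \h$ and constant $k = 0$. With those choices the condition in the corollary collapses to the single identity
\[
\langle [\xi, \zeta]_\m, \xi \rangle = 0 \qquad \text{for all } \xi, \zeta \in \m,
\]
so the whole proof reduces to deriving this identity from the naturally reductive hypothesis.

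First I would unpack Definition \ref{def0}: for each $\xi \in \m$ the operator $\ad(\xi)|_\m$ belongs to the Lie algebra $\mathfrak{l}$ of $L$, that is, it is skew-adjoint with respect to the (possibly indefinite) inner product on $\m$. In bilinear form this is exactly
\[
\langle [\xi, \zeta]_\m, \eta \rangle + \langle \zeta, [\xi, \eta]_\m \rangle = 0
\qquad \text{for all } \xi, \zeta, \eta \in \m,
\]
as recorded in the paragraph preceding the proposition.

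Next I would specialize to $\eta = \xi$. The second term contains $[\xi,\xi]_\m = 0$ and so vanishes, leaving $\langle [\xi, \zeta]_\m, \xi \rangle = 0$ for every $\zeta \in \m$. This is precisely \eqref{go-lemma} with $k = 0$ and $\alpha = 0 \in \h$ filling the role granted by Corollary \ref{cor-geod-lem}. Hence every $\xi \in \m$ is itself a geodesic vector (with $t$ as affine parameter), so every geodesic through $x_0$ is homogeneous, and by homogeneity of $M$ the geodesic orbit property holds.

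I do not anticipate a real obstacle: this is a bookkeeping application of the Geodesic Lemma. The one subtlety worth flagging is that positive definiteness of $\langle \cdot,\cdot\rangle$ is never used — only the skew-symmetry of $\ad(\xi)|_\m$ and the vanishing $[\xi,\xi]=0$ — so the argument carries over verbatim from the Riemannian to the pseudo-Riemannian setting.
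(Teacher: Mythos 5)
Your proof is correct and follows the same path as the paper: both unpack the skew-adjointness from Definition \ref{def0} into the bilinear identity $\langle [\xi,\zeta]_\m,\eta\rangle + \langle \zeta,[\xi,\eta]_\m\rangle = 0$, specialize to $\eta=\xi$ to get $\langle [\xi,\zeta]_\m,\xi\rangle = 0$, and then read this as equation \eqref{go-lemma} with $k=0$ (and $\alpha=0$) via the Geodesic Lemma. Nothing is missing; this is exactly the argument the paper gives in the paragraph immediately preceding the proposition.
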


\centerline{-----o-----}
\smallskip

Selberg introduced an important extension of the class of Riemannian 
symmetric spaces: Riemannian weakly symmetric spaces \cite{AS}.  
As mentioned in the Introduction, there are a number of characterizations
equivalent to the definition of weak symmetry for a Riemannian manifold.
The paper \cite{CW} of Chen and Wolf uses the
characterization that is geometrically most accessible.

\begin{defi}\label{def1}
Let $(M, g)$ be a pseudo-Riemannian manifold. Suppose that for every
$x \in M$ and every nonzero tangent vector $\xi \in T_xM$, there is an
isometry $\phi = \phi_{x,\xi}$ of $M$ such that $\phi(x) = x$ and
$d\phi(\xi) = -\xi$. Then $(M, g)$ is a \textit {weakly symmetric
pseudo-Riemannian manifold}. In particular, if $\phi_{x,\xi}$ is
independent of $\xi$ then $M$ is symmetric. \hfill $\diamondsuit$
\end{defi}

Riemannian symmetric spaces 
are geodesic orbit spaces.  A few years ago the first two authors 
extended this result to weak symmetry and indefinite metric:

\begin{prop}\label{prop41} {\rm \cite[\S 4]{CW}}
Weakly symmetric pseudo-Riemannian manifolds are geodesic orbit spaces.
\end{prop}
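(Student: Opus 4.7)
The proof proceeds by verifying the characterisation of Corollary \ref{cor-geod-lem}: for each nonzero $\xi \in \m$, I must exhibit $\alpha \in \h$ and a constant $k$ making $\xi + \alpha$ a geodesic vector.  My approach is geometric --- I will construct, for each such $\xi$, a one-parameter subgroup of the isometry group $G$ whose orbit through $x_0$ traces out the entire maximal geodesic $\gamma_\xi$ (up to reparametrisation).

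First I would establish that $(M,g)$ is homogeneous and geodesically complete.  Homogeneity is obtained by applying the weak symmetry at the midpoint of a geodesic segment joining two nearby points: this swaps them, and a connectedness argument extends transitivity to all of $M$, so $M = G/H$.  Completeness follows from the same midpoint device: if a maximal geodesic were defined only on $(-b,b)$ with $b$ finite, reflecting it through an interior parameter via a weak symmetry would furnish a smooth extension past $b$.  The reductive decomposition $\g = \m + \h$ is assumed throughout Section \ref{sec2}.

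Fixing a nonzero $\xi \in \m$, identified with its image in $T_{x_0}M$, let $C = \gamma_\xi(\mathbb R)$ and set
\[
L \;=\; \{\, g \in G : g(C) = C\,\}.
\]
This closed subgroup is a Lie subgroup of $G$.  For each $s \in \mathbb R$ the weak symmetry $\sigma_s = \phi_{\gamma_\xi(s/2),\, \gamma_\xi'(s/2)}$ reverses $\gamma_\xi$ about $\gamma_\xi(s/2)$, belongs to $L$, and sends $x_0$ to $\gamma_\xi(s)$.  Hence $L$ --- and therefore, since $C$ is a connected immersed one-manifold, the identity component $L_0$ --- acts transitively on $C$.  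The remaining task is to extract from $L_0$ a one-parameter subgroup whose orbit through $x_0$ is all of $C$: writing $\exp(tX)\cdot x_0 = \gamma_\xi(f(t))$ for such a subgroup realises $\gamma_\xi$ as a homogeneous geodesic, whereupon Corollary \ref{cor-geod-lem} supplies the pair $(\alpha, k)$.

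The delicate point is this last extraction step.  In the Riemannian case, compactness of the isotropy lets one average and obtain a transitive one-parameter subgroup at once, but in the pseudo-Riemannian setting compactness fails: the Killing field induced on $C$ by a choice of $X \in \mathfrak l \setminus \mathfrak l_{x_0}$ can vanish at some point of $C$, at which the orbit of $\{\exp(tX)\}$ might accumulate.  The main obstacle is therefore to exploit the abundance of reversing isometries $\sigma_s$ --- each moving $x_0$ arbitrarily far along $C$ --- to rule out this degeneracy and produce a genuinely transitive one-parameter subgroup in $L_0$.  Once this is in place, the Geodesic Lemma delivers the remainder of the proof formally.
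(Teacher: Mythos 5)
The paper does not actually supply a proof of Proposition~\ref{prop41}; it simply cites \cite[\S 4]{CW} and moves on, so there is no in-paper argument to compare against.  Your geometric strategy --- compose reversing weak symmetries along $\gamma_\xi$ to get a group $L$ preserving the geodesic, pass to $L_0$, and extract a transitive one-parameter subgroup --- is a reasonable plan and is in the spirit of the standard Riemannian argument.  But you have not proved the proposition: you announce that ``the main obstacle is therefore to \dots produce a genuinely transitive one-parameter subgroup in $L_0$'' and then stop.  That is the entire content of the proposition.  A proposal that identifies the crux and declines to resolve it is a sketch, not a proof.

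Two further remarks on how you have framed the gap.  For a spacelike or timelike $\gamma_\xi$ the gap is in fact easy to close, and not by averaging: the induced metric $g|_{\gamma_\xi}$ is nondegenerate, so the Killing vector fields of the one-dimensional manifold $(\gamma_\xi, g|_{\gamma_\xi})$ are exactly the constant multiples of the (nowhere-zero) unit field; hence the image of $\mathfrak{l}_0$ in $\mathrm{Vect}(\gamma_\xi)$ is a line of nowhere-vanishing fields, and any $X\in\mathfrak{l}_0$ with nonzero restriction already generates a transitive one-parameter group.  (The same argument works in the Riemannian case --- compactness of the isotropy is not what saves you there, so your appeal to averaging is misdirected.)  The genuine difficulty is the null case, where $g|_{\gamma_\xi}\equiv 0$ and nothing forbids the induced field from having a zero.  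But here your stated goal, to ``rule out this degeneracy,'' is also misdirected: the Geodesic Lemma explicitly allows $k\ne 0$ with the non-affine parameter $s=e^{-kt}$ precisely because, for a null homogeneous geodesic, $\exp(tX)x_0$ need only sweep out a half of $C$ via a reparametrisation $\varphi:\mathbb R\to\varphi(\mathbb R)$.  So for null $\xi$ you should not try to force a transitive one-parameter subgroup on all of $C$; you should verify directly that some $\xi+\alpha$ satisfies equation~(\ref{go-lemma}) with $k$ possibly nonzero, and your argument as written does not do this.
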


In other words, if $(M,g)$ is a weakly symmetric pseudo--Riemannian 
manifold, and if $\gamma$ is a maximal geodesic and $x \in \gamma$, there
is an isometry of $M$ which is a non-trivial involution on $\gamma$ with
$x$ as fixed point.  

\centerline{-----o-----}
\smallskip

There are many other classes of Riemannian manifolds related to Riemannian
geodesic orbit spaces, for example normal homogeneous spaces, D'Atri 
spaces and Damek-Ricci spaces.  See the survey article \cite{KPV} and the 
references there for a discussion of these classes and their relation to 
weakly symmetric spaces and geodesic orbit spaces, \cite{WZ} and \cite{JA}
for a number of examples, and the comprehensive book \cite{BN} on 
Riemannian geodesic orbit spaces.  Rather than digress to consider these 
various classes in any depth, in Section \ref{sec3} 
below we only sketch
some important background material applicable to weakly symmetric 
pseudo--Riemannian manifolds.

\section{Weakly Symmetric Pseudo-Riemannian Manifolds} \label{sec3}
In this section we sketch some of the main results on weakly symmetric
pseudo--Riemannian manifolds.  A. Selberg's original definition of weakly 
symmetric space holds also for pseudo-Riemannian manifolds, but it is 
rather complicated\footnote{Selberg: Let $(M, g)$ be a Riemannian 
manifold. If
there exists a subgroup $G$ of the isometry group $I(M,g)$ of $M$ acting
transitively on $M$ and an involutive isometry $\mu$ of $(M, g)$ with
$\mu G = G \mu$ such that whenever $x, y \in M$ there exists $\phi \in G$
with $\phi (x) = \mu(y)$ and $\phi (y) = \mu(x)$, then $(M, g)$ is a
\textit{weakly symmetric Riemannian manifold}.} 
so we use Definition \ref{def1} above.
\smallskip

A connected homogeneous pseudo-Riemannian manifold need not be geodesically
convex, but any two points can be joined by a broken geodesic. 
Going segment by segment along broken geodesics, as in the Riemannian case
we have

\begin{prop} \label{prop1}
A pseudo-Riemannian manifold $(M, g)$ is weakly symmetric
if and only if for any two points $x$, $y$ $\in M$ there is an isometry of
$M$ mapping $x$ to $y$ and $y$ to $x$.
\end{prop}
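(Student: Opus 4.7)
The plan is to prove each implication by a midpoint construction on geodesic segments, with the forward implication being the harder one.

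\emph{Forward direction.} Assume $M$ is weakly symmetric in the sense of Definition \ref{def1}. When $x,y$ lie on a single geodesic $\gamma\colon[0,1]\to M$ with $\gamma(0)=x$ and $\gamma(1)=y$, set $m=\gamma(1/2)$ and $\xi_m=\gamma'(1/2)$ and take $\phi=\phi_{m,\xi_m}$ from Definition \ref{def1}. Since $\phi(m)=m$ and $d\phi_m(\xi_m)=-\xi_m$, the curve $\phi\circ\gamma$ is a geodesic with the same initial data at $t=1/2$ as $t\mapsto\gamma(1-t)$, so by uniqueness $\phi(\gamma(t))=\gamma(1-t)$ and $\phi$ swaps $x$ and $y$. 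For general $x,y$, we use that $M$ is homogeneous (Proposition \ref{prop41}) and that two points in a connected homogeneous pseudo-Riemannian space are joined by a broken geodesic $x=x_0,x_1,\ldots,x_n=y$; the midpoint construction produces an isometry $\phi_i$ swapping each consecutive pair.

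The main obstacle is that the $\phi_i$ are not in general involutions, so no direct composition simultaneously controls both endpoints $x_0$ and $x_n$. I would circumvent this by showing that the set $R(x)=\{\,y\in M: \exists\,\phi\in I(M,g) \text{ with } \phi(x)=y \text{ and } \phi(y)=x\,\}$ is clopen in $M$, hence equals $M$ by connectedness. Openness near $y_0\in R(x)$ would combine a given swap for $(x,y_0)$ with a local swap for $(y_0,y)$ coming from a single-geodesic segment in a normal neighborhood of $y_0$, using the isotropy of $y_0$ (supplied by Definition \ref{def1} applied at $y_0$) to reconcile the compositions. Closedness is a compactness argument: a sequence of isometries $\phi_k$ swapping $x$ with $y_k\to y$ sends a fixed tangent frame at $x$ to frames at the $y_k$ in a bounded region, allowing extraction of a convergent subsequence whose limit swaps $x$ and $y$. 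Compactness of this kind is automatic in the Riemannian case by Myers--Steenrod but requires attention in the pseudo-Riemannian setting.

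\emph{Converse.} Assume the swapping property. Given $x\in M$ and a nonzero $\xi\in T_xM$, choose $t\ne 0$ small enough that $y_t:=\exp_x(t\xi)$ lies in a geodesically convex neighborhood of $x$, and take $\phi_t$ swapping $x$ and $y_t$. Uniqueness of the geodesic segment between $x$ and $y_t$ in that neighborhood forces $\phi_t$ to preserve it and reverse its orientation, so $\phi_t$ fixes the midpoint and reverses its tangent. Sending $t\to 0$ and extracting a convergent subsequence by the same compactness argument yields an isometry fixing $x$ whose differential sends $\xi$ to $-\xi$, verifying Definition \ref{def1}.
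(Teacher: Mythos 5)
Your midpoint argument for two points on a single geodesic is correct and is the same local step the paper uses. But both directions of your proposal lean on a compactness argument that is simply unavailable here, and that is a genuine gap, not a detail. You acknowledge that ``compactness of this kind is automatic in the Riemannian case by Myers--Steenrod but requires attention in the pseudo-Riemannian setting,'' but the point is that it \emph{fails}: the isotropy group of a point in a pseudo-Riemannian homogeneous space is a closed subgroup of $O(p,q)$ and is typically noncompact (e.g.\ for de Sitter space it is all of $O(n-1,1)$), so a sequence of isometries $\phi_k$ fixing points $y_k\to y$ and reversing vectors there need not have any convergent subsequence. This makes your closedness argument for $R(x)$ unsupported, and it also invalidates the $t\to 0$ limit in your converse. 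The openness argument is likewise incomplete: you note that a swap $\phi_0$ for $(x,y_0)$ and a local swap $\sigma$ for $(y_0,y)$ do not compose to a swap for $(x,y)$ (since $\sigma\phi_0$ controls $x\mapsto y$ but not $y\mapsto x$, and $\phi_0\sigma$ the reverse), and you propose to ``reconcile the compositions'' using the isotropy of $y_0$, but no reconciliation is actually given and I do not see one; this is exactly the nontrivial step, and leaving it as a phrase leaves the forward direction unproved.

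Two further remarks. First, the paper's own route is different in spirit: it invokes homogeneity (so any two points are joined by a broken geodesic) and then argues ``segment by segment'' along the broken geodesic, i.e.\ an induction on the number of segments rather than a clopen argument on $R(x)$; the paper gives this only as a sketch, but it avoids any appeal to compactness. Second, your converse can be repaired without any limit at all: given $x$ and $0\ne\xi\in T_xM$, choose $\varepsilon>0$ small enough that $p=\exp_x(-\varepsilon\xi)$ and $q=\exp_x(\varepsilon\xi)$ lie in a geodesically convex neighborhood of $x$, and let $\phi$ be the hypothesized isometry swapping $p$ and $q$. Uniqueness of the connecting geodesic in that neighborhood forces $\phi$ to reverse the segment, so $\phi$ fixes its midpoint $x$ and $d\phi_x(\xi)=-\xi$ directly, with no subsequence extraction. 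I would recommend using that version of the converse and rethinking the forward direction along the inductive lines indicated in the paper rather than via clopenness.
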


Recall the De Rham-Wu decomposition theorem \cite{HW}.  Let $(M, g)$ be a
complete simply connected pseudo-Riemannian manifold, $x \in M$, and
$T_x(M) = T_{x,0} \oplus \cdots \oplus T_{x,r}$
a decomposition of the tangent space at $x$ into holonomy
invariant mutually orthogonal subspaces, where the holonomy group at $x$ is
trivial on $T_{x,0}$ and irreducible on the other $T_{x,i}$\,.
Suppose that the pseudo-Riemannian metric $g$ has nondegenerate restriction
to $T_{x,i}$ for each index $i$\,.
Then $(M, g)$ is isometric to a pseudo-Riemannian direct product
$(M_0, g_0) \times \cdots \times (M_r, g_r)$, where
$x = (x_0, \dots , x_r)$ and for each index, $(M_i, g_i)$ has
tangent space $T_{x,i}$ at $x_i$\,.  As in the Riemannian case $(M_i, g_i)$
is the maximal integral manifold through $x$ of the distribution
obtained by parallel translating $T_{x,i}$ $M$ and equipped with the
metric $g_i$ induced by $g$.  Thus

\begin{prop}
Let $(M, g)$ be a
complete simply connected pseudo-Riemannian manifold, $x \in M$, and
$T_x(M) = T_{x,0} \oplus \cdots \oplus T_{x,r}$
a decomposition of the tangent space at $x$ into holonomy
invariant mutually orthogonal subspaces, where the holonomy group at $x$ is
trivial on $T_{x,0}$ and irreducible on the other $T_{x,i}$\,.
Suppose that the pseudo-Riemannian metric $g$ has nondegenerate restriction
to $T_{x,i}$ for each index $i$\, and let
$(M,g) = (M_0, g_0) \times \cdots \times (M_r, g_r)$ be the
De Rham-Wu decomposition. Then $(M,g)$ is weakly symmetric if and only if
each of the pseudo-Riemannian manifolds  $(M_i, g_i)$ is weakly symmetric.
\end{prop}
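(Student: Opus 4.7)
The plan is to prove both directions of the biconditional using Definition \ref{def1} as the operative characterization of weak symmetry, and to exploit the fact that the De Rham--Wu factors are recorded by parallel distributions on $(M,g)$ and are therefore geometric invariants.

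For the forward direction (each $(M_i,g_i)$ weakly symmetric implies $(M,g)$ is weakly symmetric), I would take $x = (x_0,\dots,x_r) \in M$ and a nonzero $\xi \in T_xM$, decompose $\xi = \xi_0 + \cdots + \xi_r$ with $\xi_i \in T_{x_i}M_i$, and for each index $i$ let $\phi_i$ be either the weakly symmetric isometry of $M_i$ provided by Definition \ref{def1} (when $\xi_i \neq 0$) or $\mathrm{id}_{M_i}$ (when $\xi_i = 0$). Then $\phi = \phi_0 \times \cdots \times \phi_r$ is an isometry of the pseudo--Riemannian product fixing $x$ with $d\phi(\xi) = -\xi$. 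This step is essentially bookkeeping; the only content is that a product of isometries is an isometry of the product metric.

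For the reverse direction, fix an index $i$, a point $x_i \in M_i$, and a nonzero $\xi_i \in T_{x_i}M_i$. Choosing any base points $x_j \in M_j$ for $j \neq i$, form $x = (x_0,\dots,x_r)$ and regard $\xi_i$ as an element of $T_xM = T_{x_0}M_0 \oplus \cdots \oplus T_{x_r}M_r$. Weak symmetry of $(M,g)$ furnishes an isometry $\phi$ of $M$ fixing $x$ with $d\phi(\xi_i) = -\xi_i$. The next step is to restrict $\phi$ to the factor $M_i$. For this I would argue that $\phi$ preserves the system of parallel distributions attached to the factors: since $d\phi_x$ normalizes the holonomy group at $x$, it permutes the holonomy--invariant summands $T_{x,0},\dots,T_{x,r}$ according to some permutation $\sigma$ of the index set. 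Because $-\xi_i = d\phi(\xi_i)$ is a nonzero vector lying simultaneously in $T_{x,i}$ and in $T_{x,\sigma(i)}$, the orthogonality of the summands forces $\sigma(i) = i$. Hence $\phi$ preserves the maximal integral leaf through $x$ of the $i$-th parallel distribution, which is precisely $\{x_0\}\times\cdots\times M_i\times\cdots\times\{x_r\}$, identified isometrically with $(M_i,g_i)$. Restriction of $\phi$ to this leaf produces the desired symmetry of $M_i$ at $x_i$ in the direction $\xi_i$.

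The main obstacle is to justify that isometries of $(M,g)$ must permute the De Rham--Wu factors in the pseudo--Riemannian setting. In the Riemannian case this is a well-known consequence of the de Rham decomposition theorem together with the remark that isometries preserve the Levi--Civita connection and thus the holonomy decomposition; under the nondegeneracy hypothesis on each $T_{x,i}$ (which is exactly what is assumed here) the corresponding fact falls within the framework of \cite{HW}. Once this permutation property is granted, the rest of the proof is a formal manipulation of tangent space decompositions and leaves of parallel distributions, and in particular does not require any new input from the theory of geodesic orbit spaces developed in the preceding sections.
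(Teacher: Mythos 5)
Your proposal is correct and follows the same route the paper leaves implicit (the paper records the De Rham--Wu factorization as maximal integral leaves of parallel distributions and then simply writes ``Thus''): one direction is the product of the pointwise symmetries, the other uses that an isometry fixing $x$ normalizes the holonomy group and so respects the decomposition, forcing $\sigma(i)=i$ and allowing restriction to the $i$-th leaf. One cosmetic point: what makes $\sigma(i)=i$ unavoidable is that the $T_{x,j}$ form a \emph{direct sum}, so distinct summands meet only in $0$; orthogonality alone is not decisive in the pseudo-Riemannian setting, where orthogonal subspaces can share null vectors, although the stated nondegeneracy hypothesis also rules that out here.
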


\begin{defi}
Let $G$ be a connected Lie group and $H$ be a closed subgroup. Suppose
that $\sigma$ is an automorphism of $G$ such that $\sigma(p) \in Hp^{-1}H$, 
$\forall p \in G$. Then $G/H$ is called a
\textit{weakly symmetric coset space}, $(G, H)$ is called a
\textit{weakly symmetric pair}, and $\sigma$ is called a
\textit{weak symmetry} of $G/H$. \hfill $\diamondsuit$
\end{defi}

It is easy to see that a weakly symmetric pseudo-Riemannian manifold 
is a weakly symmetric coset space $G/H$ where $G$ is the isometry group.
\smallskip

In the context of homogeneous spaces $G/H$ where $G$ is a connected Lie group
and $H$ is a compact subgroup, one often says that $G/H$ is \textit{commutative}
when the algebra of all $G$-invariant differential operators is commutative.
That is a special case (where $G$ is a connected Lie group)
of the definition in the setting of topological groups: $G$ is a separable
locally compact group, $H$ is a compact subgroup, and the convolution
algebra $L^1(H\backslash G/H)$ is commutative.  Selberg \cite{AS} proved that
a Riemannian weakly symmetric space $M = G/H$ is a commutative space, but
Lauret \cite{JLa} found commutative spaces that are not weakly symmetric.

\section{Lorentz Geodesic Orbit and Weakly Symmetric Nilmanifolds, I}
\label{sec4}
We start with a useful lemma.

\begin{lemma}\cite{AK} \label{lemma4}
Let $B \in \so(n-1, 1)$.  In a suitable basis $\{e_i\}$ of $\mathbb{R}^n$,
either

\begin{equation*}
\begin{aligned}
&\roman{enumi} (1)\, B \text{ is semisimple }, B = \left ( \begin{smallmatrix}
  \mu & 0 & 0 \\ 0 & C & 0 \\ 0 & 0 & -\mu \end{smallmatrix} \right )
\text{ with } C \in \so(n-2) \text{ and } \mu \leq 0 \text{, where }\\
&\langle e_1, e_{n} \rangle=1\,, \langle e_i, e_j \rangle=\delta_{ij}\,
(i, j= 2, \cdots , n-1) \text{ and the other scalar products vanish, or}\\
&\roman{enumi} (2) \, B \text{ is not semisimple },
B = \left ( \begin{smallmatrix} 0 & 1 & 0 & \\ 0 & 0 & 1 & \mathbf{0} \\
  0 & 0 & 0 & \\  & \mathbf{0} &  & C \end{smallmatrix} \right )
\text{ where } C \in \so(n-3),\, \mathbf{0} 
\text{ is the zero matrix of appropriate} \\
&\text{size; } -\langle e_1, e_3 \rangle=\langle e_2, e_2 \rangle=1,
\langle e_i, e_j \rangle=\delta_{ij} (i, j= 4, \cdots , n),
\text{ and the other scalar products vanish.}
\end{aligned}
\end{equation*}

\end{lemma}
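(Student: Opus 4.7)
The plan is to reduce to the Jordan decomposition $B = S + N$ with $S$ semisimple, $N$ nilpotent, $[S,N]=0$, and both $S, N \in \so(n-1,1)$ (this last inclusion uses that $\so(n-1,1)$ is an algebraic Lie algebra). The two cases $N=0$ and $N \ne 0$ will correspond to the two conclusions of the lemma. The basic principle throughout is the eigenvalue pairing that comes from $\langle Bv, w\rangle + \langle v, Bw\rangle = 0$: after extending the form bilinearly to $\mathbb{C}^n$, eigenvectors $v, w$ with eigenvalues $\lambda, \mu$ satisfy $(\lambda+\mu)\langle v, w\rangle = 0$, so nonzero eigenvalues appear in opposite pairs $\{\lambda,-\lambda\}$ and eigenvectors for non-paired eigenvalues are orthogonal.

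For the semisimple case I would decompose $\mathbb{C}^n$ into eigenspaces and group them into real $B$-invariant pieces: a real pair $\{\pm\mu\}$ with $\mu \ne 0$ gives a $2$-dim hyperbolic plane of signature $(1,1)$; an imaginary pair $\{\pm i\nu\}$ gives a $2$-dim definite space of signature $(2,0)$ or $(0,2)$; a complex quadruple $\{\pm\alpha \pm i\beta\}$ gives a $4$-dim space of signature $(2,2)$; and $\ker B$ carries the leftover signature. Because the ambient signature is $(n-1,1)$ and only one negative direction is available, complex quadruples are ruled out, each imaginary pair must contribute $(2,0)$, and at most one real nonzero pair with one-dimensional eigenspaces can occur. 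In the main sub-case such a pair $\pm\mu$ exists: pick eigenvectors and normalize so that $\langle e_1, e_n\rangle = 1$ with $e_1$ the eigenvector whose eigenvalue is non-positive, so $\mu \le 0$; then $\mathrm{span}(e_1, e_n)^\perp$ is $(n-2)$-dim positive definite and $B$ restricts there to $C \in \so(n-2)$, yielding the block form in (1). The remaining sub-case (no real nonzero eigenvalue pair) is handled by choosing a hyperbolic pair $e_1, e_n$ inside $\ker B$ and setting $\mu = 0$.

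For the non-semisimple case I would appeal to the classification of nilpotent orbits in orthogonal Lie algebras: the Jordan partition of $N$ must have even-size blocks appearing with even multiplicity, and each odd block of size $2m+1$ contributes signature $(m+1,m)$ or $(m,m+1)$ to the invariant form on its cyclic subspace. With ambient signature $(n-1,1)$ and one available negative direction, the only partition compatible with nontrivial $N$ is a single size-$3$ block (contributing signature $(2,1)$) together with trivial size-$1$ parts; any larger odd block or any pair of even blocks would already absorb at least two negative directions. Fix a Jordan chain $N e_3 = e_2$, $N e_2 = e_1$, $N e_1 = 0$ spanning the $3$-dim invariant subspace $W$, and use $\langle Nx, y\rangle + \langle x, Ny\rangle = 0$ to show that all pairings among $e_1, e_2, e_3$ vanish except $\langle e_1, e_3\rangle = -\langle e_2, e_2\rangle$; after an allowed shift $e_3 \mapsto e_3 + a e_1$ to kill $\langle e_3, e_3\rangle$ and a uniform rescaling, one arranges $\langle e_2, e_2\rangle = 1$ and $\langle e_1, e_3\rangle = -1$. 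Since $S$ commutes with $N$ and preserves the form, $S$ preserves $W$; on $W$ it commutes with the regular nilpotent of $\so(W) \cong \so(2,1)$, whose centralizer in this rank-one algebra is one-dimensional and spanned by the nilpotent itself, so $S|_W$ is simultaneously semisimple and nilpotent, hence zero. On the positive definite complement $W^\perp \cong \mathbb{R}^{n-3}$, $B = S|_{W^\perp}$ acts as an element $C \in \so(n-3)$; extending by an orthonormal basis $e_4, \ldots, e_n$ of $W^\perp$ produces the block form in (2).

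The main obstacle is the nilpotent classification in case (2): showing that a nonzero $N \in \so(n-1,1)$ must have a single size-$3$ Jordan block together with trivial parts. This amounts to signature bookkeeping on each elementary cyclic $N$-invariant subspace and is standard in the theory of nilpotent orbits of classical Lie algebras, but it is the content-heavy piece; once the Jordan type and the eigenvalue pairing are in hand, the rest of the proof is linear algebra and normalization.
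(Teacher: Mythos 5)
The paper does not prove this lemma at all; it cites Alekseevskii \cite{AK} and only remarks on a typo, so there is no proof in the text to compare yours against. Your overall framework --- additive Jordan decomposition $B = S + N$ inside the algebraic Lie algebra $\so(n-1,1)$, eigenvalue pairing $(\lambda+\mu)\langle v,w\rangle = 0$ from skew--adjointness, and signature bookkeeping on the cyclic subspaces of a nilpotent --- is the natural route. Your handling of case $(2)$ is sound: the nilpotent--orbit constraint forces a single size-$3$ Jordan block plus trivial parts, the relations $\langle Nx,y\rangle + \langle x,Ny\rangle = 0$ among the chain vectors reduce to $-\langle e_1,e_3\rangle = \langle e_2,e_2\rangle$ with the rest of the Gram matrix on $W$ killed by a shift $e_3 \mapsto e_3 + a e_1$ and a uniform rescaling, the sign $\langle e_2,e_2\rangle>0$ is forced since otherwise $W$ would carry two negative directions, and $S|_W = 0$ follows from the one-dimensional centralizer of the regular nilpotent in $\so(2,1)\cong\mathfrak{sl}(2,\mathbb{R})$.

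The gap is in your second sub-case of $(1)$, where $B$ is semisimple with no nonzero real eigenvalue and you propose to pick a hyperbolic pair $e_1, e_n$ \emph{inside} $\ker B$. That pair need not exist: each imaginary pair $\pm i\nu$ of eigenvalues absorbs a positive-definite $2$-plane (as you argue), so the kernel can be exactly one-dimensional and timelike, and then there is no $B$-invariant hyperbolic $2$-plane whatsoever. The simplest instance is the rotation $B = \bigl(\begin{smallmatrix} 0 & -1 & 0 \\ 1 & 0 & 0 \\ 0 & 0 & 0 \end{smallmatrix}\bigr) \in \so(2,1)$ for the form $\mathrm{diag}(1,1,-1)$: here $\ker B$ is the one-dimensional negative span of $f_3$, the only $B$-invariant $2$-plane is the positive-definite $\mathrm{span}(f_1,f_2)$, and the eigenvalues $0,\pm i$ can never be realized by a matrix of the shape $\mathrm{diag}(\mu, C, -\mu)$ with $C \in \so(1) = 0$ and $\mu$ real. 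So normal form $(1)$ as printed is actually unattainable for such a $B$, and the dichotomy as cited is missing the ``pure rotation'' case where the timelike line lies entirely in $\ker B$ and $B$ has rank $n-1$. This does not affect the theorems of the paper --- Lemma~\ref{lemma4} is invoked there only for nilpotent operators, where case $(1)$ with $B = 0$ and case $(2)$ suffice and your argument is complete --- but if you want to prove the lemma as stated you must either add a third normal form (block $\mathrm{diag}(0, C')$ in a timelike-plus-spacelike basis with $C' \in \so(n-1)$) or check whether \cite{AK} imposes an extra hypothesis on $B$ coming from the self-similar structure.
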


There is a typographical error in \cite{AK}; in case $(2)$
there, one should assume $r=e_1$ and $q=e_2$.
\smallskip

From Lemma \ref{lemma4}, any matrix $H \in \so(n-1, 1)$ has $n-2$ purely
imaginary eigenvalues and two non-zero real eigenvalues $\pm \mu$, or
has $n$ purely imaginary eigenvalues, viewing $0$ as purely imaginary.
Now we can state the first of our two main results, the case where
the metric is nondegenerate on $[\n, \n]$.

\begin{theorem}\label{th41}
Let $(M = G/H, \langle\,,\,\rangle)$ be a connected Lorentz geodesic
orbit nilmanifold, where $G = N \rtimes H$ with $N$ nilpotent.  
Then N is abelian, or 2-step nilpotent, or 4-step nilpotent.
\smallskip

Note that $G = N \rtimes H$ defines the reductive decomposition 
$\g=\n + \h$.  Identify $\n$ with the tangent space at $1H$ and
let $\mathfrak{v}$ denote the orthocomplement of $[\n,\n]$ in $\n$.
Suppose that $\langle \cdot, \cdot \rangle$ is nondegenerate on $[\n, \n]$.
Then either $\ad (x)=0$ for any $x \in \mathfrak{v}$, or there is a 
basis $\{x, \tilde x_1,\cdots, \tilde x_s \}$ of
$\mathfrak{v}$ $(s\ge 1)$ such that
\begin{equation*}
\begin{aligned}
&{\ad (x)}|_{[\n, \n]}=
               \left ( \begin{matrix}
                \begin{smallmatrix}
                 0 &1 &0 \\ 0 &0 &1\\ 0 &0 &0 \end{smallmatrix}
                  & {\mathbf 0}\\
                  {\mathbf 0} &
                  {\mathbf 0}
                \end{matrix} \right ) ; \qquad
\ad (\tilde x_1)|_{[\n, \n]}=
              \left(\begin{matrix} {\mathbf 0} &
                \begin{smallmatrix}
               a_1 & a_2&\cdots &a_p\\
               0 &0 &\cdots &0 \\
               0 &0 &\cdots &0 \end{smallmatrix} \\
                \begin{smallmatrix} 0 & 0 & a_1 \\
			\vdots & \vdots & \vdots \\
			0 & 0 & a_p \end{smallmatrix} & {\mathbf 0}
		\end{matrix} \right ); \\
& \ad (\tilde x_i)|_{[\n, \n]}=0 \text{ for } 2\leq s,\, 2 \leq i \leq s; \text{ and }
\ad ([y, z])|_{[\n, \n]}=0 \text{ for all } y, z \in \n.
\end{aligned}
\end{equation*}
\end{theorem}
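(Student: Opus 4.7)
The plan is to use the Geodesic Lemma (Corollary \ref{cor-geod-lem}) to force strong linear-algebraic constraints on $\ad(\xi)|_{[\n,\n]}$ for $\xi \in \n$, and then to apply Lemma \ref{lemma4} to classify the resulting operators. The crucial preliminary observation is that for $\beta \in \h$ the derivation $\ad(\beta)$ preserves the nilradical $\n$ and its commutator ideal, hence also the orthogonal complement $\mathfrak v = [\n,\n]^\perp$, since the metric is $\Ad(H)$-invariant. I first apply Corollary \ref{cor-geod-lem} to $\xi = z \in [\n,\n]$: there exist $\beta \in \h$ and $k \in \mathbb R$ with $\langle [z+\beta,\zeta]_\n,z\rangle = k\langle \zeta,z\rangle$ for every $\zeta \in \n$. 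Testing against $\zeta = x \in \mathfrak v$, the right hand side vanishes by orthogonality, $[\beta,x] \in \mathfrak v$ is orthogonal to $z$, and only $\langle [x,z],z\rangle = 0$ survives. Polarizing in $z$ shows that $\ad(x)|_{[\n,\n]}$ is skew-symmetric with respect to the induced nondegenerate metric, for every $x \in \mathfrak v$. Nilpotency of $\n$ makes this operator nilpotent as well.

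At this point Lemma \ref{lemma4} classifies the options. If $[\n,\n]$ inherits a positive-definite metric, a nilpotent skew-symmetric operator must vanish, so $\ad(x)|_{[\n,\n]} = 0$ for all $x \in \mathfrak v$. Otherwise $[\n,\n]$ carries the Lorentz signature, and Lemma \ref{lemma4} leaves only two nilpotent possibilities: either $\ad(x)|_{[\n,\n]} = 0$ (case (1) forces $\mu = 0$ and the compact block $C=0$), or $\ad(x)|_{[\n,\n]}$ is the size-$3$ nilpotent Jordan block on the Lorentz $3$-plane $\mathrm{span}\{e_1,e_2,e_3\}$ and is zero on its positive-definite complement (case (2) forces $C=0$). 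This is precisely the displayed form for $\ad(x)$.

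If every $\ad(x)|_{[\n,\n]}$ vanishes, the first alternative of the theorem holds. Otherwise I fix $x \in \mathfrak v$ realising the Jordan block and adopt the adapted Lemma \ref{lemma4} basis $\{e_1,\dots,e_n\}$ of $[\n,\n]$. For any other $\tilde x \in \mathfrak v$, each linear combination $a\,\ad(x) + b\,\ad(\tilde x) = \ad(ax + b\tilde x)|_{[\n,\n]}$ is again skew-symmetric and nilpotent. After subtracting a suitable multiple of $\ad(x)$ one may arrange the upper-left $3 \times 3$ block of $\ad(\tilde x)$ to vanish; skew-symmetry with respect to the Lorentz form together with nilpotency of all linear combinations then couples the upper-right and lower-left off-diagonal blocks into the displayed pattern, governed by a single parameter tuple $(a_1,\dots,a_p)$. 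Choosing a basis $\{x,\tilde x_1,\tilde x_2,\dots,\tilde x_s\}$ of $\mathfrak v$ so that $\tilde x_1$ realises this form and $\ad(\tilde x_i)|_{[\n,\n]} = 0$ for $i \geq 2$ gives the second alternative.

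It remains to verify $\ad([y,z])|_{[\n,\n]} = 0$ for all $y,z \in \n$ and to read off the nilpotency step. For $y,z \in \mathfrak v$ this is a direct matrix computation of $[\ad(y),\ad(z)]$ in the basis above; the cases where $y$ or $z$ lies in $[\n,\n]$ reduce via the Jacobi identity together with additional GO-tests (applied to $\xi \in [\n,\n]$ against suitable $\zeta$). A short computation of the lower central series in the Jordan-block case then gives $\mathfrak n^4 = \mathrm{span}\{e_1\} \neq 0$ and $\mathfrak n^5 = 0$, so $\n$ is exactly $4$-step, in particular ruling out the $3$-step case; in the trivial case $\mathfrak n^3 = 0$ and $\n$ is at most $2$-step. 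The main obstacle is the rigidity argument in the third step: controlling a two-dimensional family of nilpotent skew-symmetric elements of $\so(n-1,1)$ that share the Jordan flag of $\ad(x)$, and extracting the matched parameter structure in the two off-diagonal blocks of $\ad(\tilde x_1)$.
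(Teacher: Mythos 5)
Your opening two paragraphs align with the paper's strategy: applying Corollary \ref{cor-geod-lem} to $\eta\in[\n,\n]$ against $\zeta\in\mathfrak v$ gives the paper's equation \eqref{051}, i.e.\ $\ad(\xi)|_{[\n,\n]}$ is skew for every $\xi\in\mathfrak v$, and Lemma \ref{lemma4} then restricts any single such nilpotent operator to zero or the $3\times3$ nilpotent Jordan block on a Lorentz $3$-plane (case (2) with $C=0$). That part is sound, and your direct dispatch of the definite case is cleaner than the paper's citation of \cite{CW}.

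The genuine gap is in your third paragraph, and it is larger than you concede.  After subtracting multiples of $\ad(x)$ to kill the upper-left $3\times3$ blocks, each $\ad(\tilde x_i)|_{[\n,\n]}$ is encoded by a tuple $b(\tilde x_i)\in\mathbb R^p$, as in the paper's \eqref{tr1}.  The set of all such operators $T_b$, $b\in\mathbb R^p$, is a $p$-dimensional family of skew-symmetric nilpotent rank-$\le2$ elements of $\so(p+2,1)$ with $T_b^3=0$, pairwise commuting, and each commuting with $\ad(x)|_{[\n,\n]}$.  Consequently \emph{none} of the constraints you invoke (skew-symmetry, the rank bound from Lemma \ref{lemma4}, nilpotency of all linear combinations, nilpotency of commutators) forces the family $\{\ad(\tilde x_i)|_{[\n,\n]}\}$ to be at most one-dimensional, which is what the displayed conclusion asserts.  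The reduction to a single $\tilde x_1$ is a genuinely Lie-theoretic constraint coming from the Jacobi identity in $\n$: the paper writes $[x,\tilde x_j]=\sum_k a_j^k e_k$, observes that some $a_j^3\ne 0$ (because $\mathfrak v$ generates $\n$ and no other bracket produces an $e_3$-component), normalizes so $a_1^3\ne0$ and $a_j^3=0$ for $j>1$, and then compares components of the Jacobi identity on triples $(\tilde x_i,x,\tilde x_j)$ in \eqref{jacobi} to force $b(\tilde x_i)=0$ for $i\ge2$.  That step is the substance of the theorem and is entirely missing from your outline; ``extracting the matched parameter structure'' is not a matrix-rigidity problem.

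A secondary issue: the statement $\ad([y,z])|_{[\n,\n]}=0$ for all $y,z\in\n$ is not a direct matrix computation until the above reduction is done, and the paper extracts it gradually — nilpotency of $\ad([x,y])$ kills $a_{21}$ and $a_{11}$, vanishing trace of $(\ad(y))^2$ kills the $b_{2i}$, and only after the Jacobi step does one have a commutative image $\ad(\mathfrak v)|_{[\n,\n]}$ so that, since $\mathfrak v$ generates $\n$, all of $\ad([\n,\n])|_{[\n,\n]}$ vanishes.  Your sketch captures the first two mechanisms in spirit, but because it never supplies the Jacobi argument the proposal as written does not establish the theorem.
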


\begin{proof}
We may assume that $[\n,\n] \ne 0$.
Note that $\mathfrak{v} = [\n, \n]^\perp \cap \n$ is $Ad_G(H)$-invariant,
where of course orthogonality is relative to
$\langle \cdot , \cdot \rangle$.
For any $\eta \in [\n, \n]$, the Geodesic Lemma provides $\alpha_\eta \in \h$ 
such that
$\langle [\eta+\alpha_\eta, \xi]_\n, \eta \rangle=k\langle \eta, \xi \rangle=0$
for every $\xi \in \mathfrak{v}$. It follows that 
$\langle [\xi, \eta]_\n, \eta \rangle=0$. That is, for any 
$\xi \in \mathfrak{v}$ and $\eta,\zeta \in [\n, \n]$,
\begin{equation}\label{051}
\langle [\xi, \eta], \zeta] \rangle + \langle \eta,[\xi,\zeta]\rangle = 0
\end{equation}

We are assuming that $\langle \cdot, \cdot \rangle$ is nondegenerate on 
$[\n, \n]$.  Then $\n= [\n, \n] + \mathfrak{v}$.
If $[\n, \n]$ is positive or negative definite, then (\cite[Theorem 4.12]{CW})
$\n$ is commutative or $2$-step nilpotent.  Those cases aside,
we suppose that $[\n, \n]$ is indefinite.  

We are going to prove that $[\n,\n]$ is abelian.  Since $[\n,\n]$ is 
nilpotent, this is automatic if $\dim [\n,\n] < 3$, so we
may suppose $\dim [\n,\n] \geq 3$.  Write $\dim [\n, \n]=p+3$ with $p \geq 0$.
Fix $x \in \mathfrak{v}$.  By Lemma \ref{lemma4}, $[\n, \n]$ has a basis
$\{e_1, e_2, \cdots , e_{p+3}\}$ in which the inner product has matrix
\begin{equation} \label{052}
\langle \cdot, \cdot \rangle_{[\n, \n]} = \left ( \begin{smallmatrix}
                \begin{smallmatrix}
                 0 &0 &-1 \\ 0 &1 &0\\ -1 &0 &0 \end{smallmatrix}
                  & {\mathbf 0}\\
                  {\mathbf 0} & I_p
                \end{smallmatrix}\right )
\end{equation}
and if $\ad (x)\not=0$ it has matrix
\begin{equation}\label{5003}
              \ad (x)|_{[\n, \n]} = \left ( \begin{smallmatrix}
                \begin{smallmatrix}
                 0 &1 &0 \\ 0 &0 &1\\ 0 &0 &0 \end{smallmatrix}
                  & {\mathbf 0}\\
                  {\mathbf 0} &  {\mathbf 0}
                \end{smallmatrix}\right )
\end{equation}
If $\ad (x)=0$ for every $x \in \mathfrak{v}$, then $\n$ is 2-step nilpotent, 
because $\mathfrak{v}$ generates $\n$. In the following we assume that there 
exists $x \in \mathfrak{v}$ such that $\ad (x)\not=0$.
\smallskip

Let $y \in \mathfrak{v}$.  The matrix of $\ad (y)$ in 
the given basis on $[\n, \n]$ has form $\left ( \begin{smallmatrix}
A &B \\ C  &D \end{smallmatrix} \right )$
where $D\in {\mathbb R}^{p\times p}$. By Lemma \ref{lemma4}, in a possibly
modified basis, either
$\ad (y)|_{[\n, \n]} = \left ( \begin{smallmatrix}
                \begin{smallmatrix}
                 0 &1 &0 \\ 0 &0 &1\\ 0 &0 &0 \end{smallmatrix}
                  & {\mathbf 0}\\
                  {\mathbf 0} &  {\mathbf 0}
                \end{smallmatrix}\right )$
or ${\ad (y)}|_{[\n, \n]}= \mathbf 0$.
\smallskip

If $\ad (y)|_{[\n, \n]} \ne 0$ the rank $r(\ad (y)|_{[\n, \n]})=2$.
If $D \ne 0$ let $\lambda \gg 0$ be a large real number
and consider the matrix of
$\ad (\lambda x +y)|_{[\n, \n]}$.  For $\lambda$ it has rank $\ge 3$.
This contradiction shows $D = \mathbf 0$.
\smallskip

Express $A= \left ( \begin{smallmatrix}
                 a_{11} &a_{12} &a_{13} \\
                   a_{21} &a_{22} &a_{23} \\
                   a_{31} &a_{32} &a_{33}
                \end{smallmatrix} \right )$.
In the basis $\{e_1, e_2, \cdots , e_{p+3}\}$,
$\ad (\lambda x +y)|_{[\n, \n]}$ has matrix
$\left ( \begin{smallmatrix}
\tilde A & \tilde B \\ \tilde C  & \mathbf{0} \end{smallmatrix} \right )$
where $ \tilde A= \left ( \begin{smallmatrix}
                 a_{11} &a_{12}+\lambda &a_{13} \\
                   a_{21} &a_{22} &a_{23} +\lambda \\
                   a_{31} &a_{32} &a_{33}\\
                \end{smallmatrix} \right )$.
If $a_{31} \ne 0$ and  $\lambda \gg 0$ then $\det (\tilde A) \ne 0$,
so $r(\ad (\lambda x +y)|_{[\n, \n]}) \ge3$.  This contradiction
shows $a_{31} = 0$.  Similarly the first column of $C$ and the third row of
$B$ vanish.  We will need these constraints on the matrix
$\ad (y)|_{[\n, \n]} = \left ( \begin{smallmatrix}
                 A & B \\ C & {\mathbf 0} \end{smallmatrix} \right )$.
\smallskip

The metric on $N$ has matrix
$\left ( \begin{smallmatrix} W & {\mathbf 0}\\{\mathbf 0}  & I_p
\end{smallmatrix} \right )$ where $W= \left ( \begin{smallmatrix}
                 0 &0 &-1 & \\
                 0 &1 &0  \\
                 -1 &0 &0 \end{smallmatrix} \right )$,
by equation (\ref{051}).  Thus
$$ \left ( \begin{smallmatrix} A^t &C^t \\
                     B^t  & {\mathbf 0} \end{smallmatrix} \right )
	\left ( \begin{smallmatrix} W & {\mathbf 0} \\ {\mathbf 0} & I_p
		\end{smallmatrix} \right ) +
	\left ( \begin{smallmatrix} W & {\mathbf 0} \\ {\mathbf 0} & I_p
                \end{smallmatrix} \right ) \left ( \begin{smallmatrix}
		A &B \\
                     C  & {\mathbf 0} \end{smallmatrix} \right )=\mathbf 0.$$
In other words,
\begin{equation}\label{502}
A^t\, W= -W A \text{ and } C^t=-W B.
\end{equation}

Let $A= \left ( \begin{smallmatrix}
                 a_{11} &a_{12} &a_{13} \\
                   a_{21} &a_{22} &a_{23} \\
                   0 &a_{32} &a_{33}
                \end{smallmatrix} \right )$,
                $B= \left ( \begin{smallmatrix}
                 b_{11} & b_{12}&\cdots &b_{1p} \\
                   b_{21} &b_{22} &\cdots &b_{2p} \\
                   0 &0 &\cdots &0
                \end{smallmatrix} \right )$ and
		$C^t= \left ( \begin{smallmatrix}
                   0 &0 &\cdots &0 \\
                   c_{21} &c_{22}&\cdots &c_{2p} \\
                    c_{31} &c_{32} &\cdots &c_{3p}
                \end{smallmatrix} \right )$.
By (\ref{502}), $A= \left ( \begin{smallmatrix}
                 a_{11} &a_{12} &0 \\
                   a_{21} &0 &a_{12} \\
                   0 &a_{21} &-a_{11}
                \end{smallmatrix} \right )$.
Further. $c_{2i}=-b_{2i}$ and $c_{3i}=b_{1i}$ for $i= 1, \cdots, p$.  
It follows that
\begin{equation}\label{5004}
\ad (y)|_{[\n, \n]}=
\left(\begin{array}{ccc:cccc}
               a_{11} &a_{12} &0 &b_{11} & b_{12}&\cdots &b_{1p}\\
               a_{21} &0 &a_{12} &b_{21} &b_{22} &\cdots &b_{2p} \\
               0 &a_{21} &-a_{11} &0 &0 &\cdots &0\\
                \hdashline
                 0 & -b_{21} & b_{11} & & & &\\
                 \vdots & \vdots & \vdots & & \mathbf{0} & &\\
                  0 & -b_{2p} & b_{1p} & & & &\\
                \end{array}\right).
\end{equation}
Since $\ad(\n)$ preserves both $[\n,\n]$ and $\mathfrak{v} = [\n,\n]^\perp$,
we have $\ad ([x, y])|_{[\n,\n]} = [\ad(x)|_{[\n,\n]}, \ad(y)|_{[\n,\n]}]$.
Combining (\ref{5003}) and (\ref{5004}), now,
\begin{equation*}
\ad ([x, y]) |_{[\n, \n]}=
              \left(\begin{array}{ccc:cccc}
               a_{21} &-a_{11} &0 &b_{21} &b_{22} &\cdots &b_{2p}\\
               0 &0 &-a_{11} &0 &0 &\cdots &0 \\
               0 &0 &a_{21} &0 &0 &\cdots &0\\
                \hdashline
                 0 & 0 & b_{21} & & & &\\
                 \vdots &\vdots & \vdots  & & \mathbf{0} & &\\
                  0 & 0& b_{2p}  & & & &\\
                \end{array}\right).
\end{equation*}

As $\ad ([x, y])|_{[\n, \n]}$ is nilpotent, its eigenvalues all are zero.
Thus $a_{21}=0$.  Similarly, from (\ref{5004}), $a_{11}=0$. Moreover,
the matrix of $(\ad (y)|_{[\n, \n]})^2$ is
\begin{small}
\begin{equation}\label{5005}
(\ad (y)|_{[\n,\n]})^2=
\left(\begin{array}{ccc:cccc}
              0 &-\sum^p_{i=1}b_{1i}b_{2i} &
		a_{12}^2+\sum^p_{i=1}b_{1i}^2 &b_{21} &
			b_{22}&\cdots &b_{2p}\\
              0 &-\sum^p_{i=1}b_{2i}^2 &
		\sum^p_{i=1}b_{1i}b_{2i}&0&0&\cdots &0 \\
              0 &0 &0 &0 &0 &\cdots &0\\
         \hdashline
              0 &0  & -a_{12}b_{21} &
		-b_{21}^2 &-b_{21}b_{22} &\cdots &-b_{21}b_{2p}\\
                 \vdots &\vdots & \vdots  & \vdots & \vdots &\vdots & \vdots \\
              0 & 0 & -a_{12}b_{2p} &-b_{2p}b_{21} &-b_{2p}b_{22}  &
		\cdots &-b_{2p}^2\\
              \end{array}\right).
\end{equation}
\end{small}
From that we compute the trace Tr($(\ad (y)|_{[\n, \n]})^2) =
-2\sum_1^p b_{2i}^2$.  As $(\ad (y)|_{[\n, \n]})^2$ is nilpotent, it has
trace $0$, so $b_{21}= \cdots =b_{2p}=0$.
\smallskip

From these calculations we have
\begin{equation}\label{vanish1}
\ad ([x, y]) |_{[\n, \n]}=0 \text{ for all } y \in \mathfrak{v}.
\end{equation}
Writing $a(y)$ for $a_{12}$ and $b_j(y)$ for $b_{1j}$ we also have
\begin{equation}\label{firstapprox}
\ad (y)|_{[\n, \n]}=
\left(\begin{array}{ccc:cccc}
               0 &a(y) &0 &b_{1}(y)& b_{2}(y)&\cdots &b_{p}(y)\\
               0 &0 &a(y) &0 &0 &\cdots &0 \\
               0 &0 &0 &0 &0 &\cdots &0\\
                \hdashline
                 0 & 0 & b_{1}(y) & & & &\\
                 \vdots &\vdots & \vdots & & \mathbf{0} & &\\
                  0 & 0 & b_{p}(y) & & & &\\
                \end{array}\right), \quad \text{ for all } y \in \mathfrak{v}.
\end{equation}
Initially (\ref{firstapprox}) requires $y$ to be linearly independent of $x$,
but it
holds for all $y \in \mathfrak{v}$ with $a(y) = 1$ and $b_j(y) = 0$.
\smallskip

We continue to simplify the structure of $\ad (y)|_{[\n, \n]}$.  For the
moment assume
$\dim\,\mathfrak{v} = s+1 \geq 2$.  Extend $\{x\}$ to a basis
$\{x, x_1,\cdots, x_s \}$ of $\mathfrak{v}$.  Using (\ref{firstapprox})
\begin{equation*}
{\ad (x_i)}|_{[\n, \n]}=
              \left(\begin{array}{ccc:cccc}
               0 &a(x_i) &0 &b_1(x_i) & b_2(x_i)&\cdots &b_p(x_i)\\
               0 &0 &a(x_i) &0 &0 &\cdots &0 \\
               0 &0 &0 &0 &0 &\cdots &0\\
                \hdashline
                 0 & 0 & b_1(x_i) & & & &\\
                 \vdots &\vdots & \vdots & & \mathbf{0} & &\\
                  0 & 0 & b_p(x_i) & & & &\\
                \end{array}\right) \text{ for } 1 \leq i \leq s.
\end{equation*}

From this point on, in the proof of Theorem \ref{th41}, we will make
successive modifications of the basis $\{x, x_1, \cdots, x_s-a(x_s) x \}$,
along the lines of Gauss Elimination.  To avoid complicated notation
we use $\{x, \tilde x_1,\cdots, \tilde x_s \}$ for each of
the successive modifications.
\smallskip

In the basis $\{x, \tilde x_1,\cdots, \tilde x_s \}:=
\{x, x_1-a(x_1) x,\cdots, x_s-a(x_s) x \}$ we now have
\begin{equation}\label{tr1}
{\ad (\tilde x_i)}|_{[\n, \n]}=
           \left(\begin{array}{ccc:cccc}
            0 &0 &0 &b_1(\tilde x_i) & b_2(\tilde x_i)&\cdots &b_p(\tilde x_i)\\
               0 &0 &0 &0 &0 &\cdots &0 \\
               0 &0 &0 &0 &0 &\cdots &0\\
                \hdashline
                 0 & 0 & b_1(\tilde x_i) & & & &\\
                 \vdots &\vdots &\vdots  & & \mathbf{0} & &\\
                  0 & 0 & b_p(\tilde x_i) & & & &\\
                \end{array}\right) \text{ for } 1 \leq i \leq s.
\end{equation}
From (\ref{tr1}) we compute $[\ad(\tilde x_i),\ad(\tilde x_j)]|_{[\n,\n]}
= [\ad(\tilde x_i)|_{[\n,\n]},\ad(\tilde x_j)|_{[\n,\n]}] = 0$ for
$1 \le i,j \le s$.  Also, from (\ref{5003}) together with
(\ref{tr1}), $[\ad(x),\ad(\tilde x_j)]|_{[\n,\n]}
= [\ad(x)|_{[\n,\n]},\ad(\tilde x_j)|_{[\n,\n]}] = 0.$  Thus
\begin{equation}\label{vn}
[[\mathfrak{v},\mathfrak{v}], [\n,\n]] = 0.
\end{equation}

Assume $s \geq 2$.  Write $[x, \tilde x_i]=\sum_{k=1}^{p+3} a_i^k e_k$
and $[\tilde x_i, \tilde x_j] = \sum_{k=1}^{p+3} a_{i,j}^k e_k$\,.  Then
\begin{equation}\label{jacobi}
\begin{aligned}
&[\tilde  x_i,[x, \tilde x_j]] = {\sum}_{k=1}^{p+3} a_j^k[\tilde x_i, e_k]
= a_j^3 {\sum}_{\ell=1}^pb_\ell (\tilde x_i) e_{\ell+3} +
\left ( {\sum}_{k=1}^p a_j^{k+3}b_k(\tilde x_i)\right ) e_1\,, \\
&[x, [\tilde x_j , \tilde  x_i]] =
        {\sum}_{k=1}^{p+3} a_{j,i}^k[x,e_k] = a_{j,i}^2\,e_1 + a_{j,i}^3\,e_2\,,
        \text{ and } \\
&[\tilde x_j, [\tilde x_i,x]] = -{\sum}_{k=1}^{p+3} a_i^k[\tilde x_j, e_k]
        = -a_i^3 {\sum}_{\ell=1}^p b_\ell(\tilde x_j) e_{\ell +3} +
\left ( {\sum}_{k=1}^p a_i^{k+3}b_k(\tilde x_j)\right ) e_1
\end{aligned}
\end{equation}
The first and third terms here have no $e_2$ component.  From the Jacobi
Identity $[x, [\tilde x_j , \tilde  x_i]]$ has no $e_2$ component, i.e.
$a_{j,i}^3 = 0$,
so $[x, [\tilde x_j , \tilde  x_i]] = a_{j,i}^2\,e_1$ and
$[\tilde x_j , \tilde  x_i]$ has no $e_3$ component.
\smallskip

Suppose that $[x,\tilde x_j]$ has nonzero $e_3$ component.  At least one
of those $e_3$ components is nonzero because $\mathfrak{v}$ generates $\n$.
We next modify the basis $\{x,\tilde x_1, \dots , \tilde x_s\}$ of
$\mathfrak{v}$ by (1) permuting the $\{\tilde x_j\}$ if necessary so that
$[x, \tilde x_1]$ has nonzero $e_3$ component, and (2) if $j > 1$ and
$[x, \tilde x_j]$ has nonzero $e_3$ component then subtract a multiple of
$\tilde x_1$ from $\tilde x_j$ so that $[x,\tilde x_j]$ has $e_3$ component
zero.  Then (\ref{tr1}), and thus (\ref{jacobi}), still hold for
the modified $\tilde x_i$\,.
\smallskip

We have arranged $a_{j,i}^3$ = 0 for $1 \leq i,j \leq s$, $a_1^3 \ne 0$, and
$a_k^3 = 0$ for $k > 1$.  Thus, in (\ref{jacobi}),
$[\tilde  x_i,[x, \tilde x_j]]$ is a multiple of $e_1$ when $j > 1$,
$[x, [\tilde x_j , \tilde  x_i]]$ is a multiple of $e_1$ in general, and
$[\tilde x_j, [\tilde x_i,x]]$ is a multiple of $e_1$ when $i > 1$.
From the Jacobi Identity, if $i > 1$ then $[\tilde  x_i,[x, \tilde x_1]]$ is
a multiple of $e_1$\,.  Again from (\ref{jacobi})
$a_1^3 {\sum}_{\ell=1}^pb_\ell (\tilde x_i) e_{\ell+3} = 0$, and since
$a_1^3 \ne 0$ this says that each $b_\ell (\tilde x_i) = 0$.  Going back to
(\ref{tr1}),
$$
\text{ if } i > 1 \text{ then } \ad(\tilde x_i)|_{[\n,\n]} = 0.
$$

In summary we see that $\n$ has a very simple structure.
Associated with an appropriate basis $\{x, \tilde x_1,\cdots, \tilde x_s \}$
of $\mathfrak{v}$ $(s\ge 2)$, 
\begin{equation}\label{structure1}
\begin{aligned}
&{\ad (x)}|_{[\n, \n]}=
               \left ( \begin{matrix}
                \begin{smallmatrix}
                 0 &1 &0 \\ 0 &0 &1\\ 0 &0 &0 \end{smallmatrix}
                  & {\mathbf 0}\\
                  {\mathbf 0} &
                  {\mathbf 0}
                \end{matrix} \right ) ; \qquad
\ad (\tilde x_1)|_{[\n, \n]}=
              \left(\begin{matrix} {\mathbf 0} &
                \begin{smallmatrix}
               a_1 & a_2&\cdots &a_p\\
               0 &0 &\cdots &0 \\
               0 &0 &\cdots &0 \end{smallmatrix} \\
                \begin{smallmatrix} 0 & 0 & a_1 \\
                        \vdots & \vdots & \vdots \\
                        0 & 0 & a_p \end{smallmatrix} & {\mathbf 0}
                \end{matrix} \right ); \\
& \ad (\tilde x_i)|_{[\n, \n]}=0 \text{ for } 2 \leq i \leq s; \text{ and }
\ad ([y, z])|_{[\n, \n]}=0 \text{ for all } y, z \in \n.
\end{aligned}
\end{equation}
In particular $[\n, \n]$ is abelian.  Thus $\n$ is at most $4$-step nilpotent.

In order to see that $\n$ cannot be $3$ step nilpotent we use the structure
just described. Suppose $[\n, \n] \ne 0 \ne [\n,[\n, \n]]$.  Then, by 
construction, $[\n, \n]=Span\{e_1, ...,e_{p+3}\}$.  
By (\ref{structure1}), if $p=0$, $[\n,[\n, \n]]=Span\{e_1, e_2\}$; if $p \ne 0$, $[\n,[\n, \n]]=Span\{e_1, e_2, \sum_{i=1}^p a_{i}e_{3+i}\}$.
Note that $e_3$ is not contained in this span.
Continuing with (\ref{structure1})
we see $[\n,[\n,[\n, \n]]]=Span\{e_1\}$.  This eliminates the possibility of
$3$--step nilpotence and completes the proof of Theorem \ref{th41}.
\end{proof}

\begin{remark}\label{4step}
In connection with Theorem \ref{th41}, there are many examples where
$\n$ is abelian or $2$--step nilpotent, but we have not been able
to construct an example where it is $4$--step nilpotent.  So, at the 
moment, $4$--step nilpotence is only a necessary condition.
\end{remark}

\section{Lorentz Geodesic Orbit and Weakly Symmetric Nilmanifolds, II}
\label{sec5}

The second of our two main results, the case where
the metric is degenerate on $[\n, \n]$, is as follows.
The result contrasts with Theorem \ref{th41}, and essentially coincides
with the situation for Riemannian manifolds.

\begin{theorem}\label{th42}
Let $(M=G/H, \langle, \rangle)$ be a connected Lorentz geodesic orbit
nilmanifold.  
Suppose that $G =N \rtimes H$ with $N$ nilpotent.
Suppose further that there is a reductive decomposition
$\g=\n + \h$, where $[\n, \n]$ is degenerate and the action of
$\Ad(H)|_\n$ is completely reducible on $\n$. Then $\n$ is at most
$2$-step nilpotent.
\smallskip

Furthermore, there is a basis $\{e_1, \cdots, e_p; e_{p+1}\}$ of
$[\n,\n]$ and a basis $\{v_0; v_1, \cdots , v_s\}$ of a vector space complement
$\mathfrak{a}$ to $[\n,\n]$ in $\n$ with the following properties.

{\rm (1)} $\mathfrak{v_1} := {\rm Span}(e_1, \cdots, e_p)$ and
$\mathfrak{v_2} := {\rm Span}(v_1, \cdots , v_s)$ are both positive definite
or both negative definite and are $\Ad(H)$--invariant,

{\rm (2)} $[\n,\n] \cap [\n,\n]^\perp =
e_{p+1}\mathbb{R}$ and $\mathfrak{a} \cap \mathfrak{a}^\perp = v_0\mathbb{R}$
are $\Ad(H)$--invariant,

{\rm (3)} $\mathfrak{w} := {\rm Span}(e_{p+1}, v_0)$ is of signature $(1,1)$,

{\rm (4)} $\n = \mathfrak{v_1} + \mathfrak{w} + \mathfrak{v_2}$
is an $\Ad(H)$--invariant orthogonal direct sum,

{\rm (5)} ${\ad (x)}|_{[\n, \n]}=0$ for any $x \in \mathfrak{a}$.
\end{theorem}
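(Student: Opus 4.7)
The radical $R := [\n,\n] \cap [\n,\n]^\perp$ is $\Ad(H)$-invariant and, since $\n$ is Lorentz, at most one-dimensional. The degeneracy hypothesis makes $\dim R = 1$, say $R = \mathbb{R} e_{p+1}$ with $e_{p+1}$ null. The orthogonal $e_{p+1}^\perp$ is $\Ad(H)$-invariant of codimension one, so complete reducibility furnishes an $\Ad(H)$-invariant $1$-dimensional complement $\mathbb{R} v_0$; I would modify $v_0$ by a multiple of $e_{p+1}$ (inside its own character subspace) so that $\langle v_0, v_0\rangle = 0$ and $\langle v_0, e_{p+1}\rangle = 1$. The $\Ad(H)$-invariant subspace $U := e_{p+1}^\perp \cap v_0^\perp$ is positive definite, being the orthogonal of the Lorentz plane $\mathfrak{w} := {\rm Span}(e_{p+1}, v_0)$ of signature $(1,1)$. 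Setting $\mathfrak{v}_1 := [\n,\n] \cap U$ and letting $\mathfrak{v}_2$ be its orthogonal $\Ad(H)$-complement in $U$ gives the splittings $[\n,\n] = R \oplus \mathfrak{v}_1$, $\n = \mathfrak{v}_1 + \mathfrak{w} + \mathfrak{v}_2$, and $\mathfrak{a} := \mathbb{R} v_0 \oplus \mathfrak{v}_2$ asserted in (1)--(4); in particular $[\n,\n]^\perp = R \oplus \mathfrak{v}_2$.

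\noindent\textbf{Bracket relations from the Geodesic Lemma.} Applied to $\eta \in [\n,\n]$ tested against $\xi \in [\n,\n]^\perp$, the Geodesic Lemma collapses to $\langle [\xi, \eta], \eta\rangle = 0$ (the right-hand side $k\langle \eta, \xi\rangle$ vanishes by orthogonality, and the $\alpha_\eta$-term vanishes because $[\alpha_\eta, \eta] \in [\n,\n]$ is again orthogonal to $\xi$). Polarising makes $\ad(\xi)|_{[\n,\n]}$ skew-adjoint for the degenerate form on $[\n,\n]$. In the basis $\{e_{p+1}, e_1, \dots, e_p\}$ with Gram matrix $\mathrm{diag}(0, I_p)$, such skew-adjoint matrices have the block form $\bigl(\begin{smallmatrix} a & b^t \\ 0 & D \end{smallmatrix}\bigr)$ with $D$ skew-symmetric; the induced action of $\ad(\xi)$ on the positive-definite quotient $[\n,\n]/R \cong \mathfrak{v}_1$ is $D$, is nilpotent (since $\ad(\xi)$ is), and so vanishes, after which nilpotency of the whole matrix forces $a = 0$. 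Thus $\ad(\xi)$ kills $R$ and sends $\mathfrak{v}_1$ into $R$ for every $\xi \in R + \mathfrak{v}_2$, yielding $[\mathfrak{v}_2, e_{p+1}] = 0$, $[\mathfrak{v}_2, \mathfrak{v}_1] \subseteq R$, and $[e_{p+1}, \mathfrak{v}_1] \subseteq R$. Testing the Geodesic Lemma directly with $\xi = v_0$ against $\zeta \in \mathfrak{v}_1$ forces $[v_0, \mathfrak{v}_1] \subseteq \mathfrak{v}_1$, and testing $\xi \in \mathfrak{v}_1$ against $\zeta = v_0$ makes $\ad(v_0)|_{\mathfrak{v}_1}$ skew-adjoint (and nilpotent) on the positive-definite $\mathfrak{v}_1$, hence zero. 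The compound choices $\xi = v_0 + c\,e_{p+1}$ ($c\ne 0$), $\xi = v_0 + w$ ($w\in\mathfrak{v}_2$), and $\xi = v_0 + y$ ($y\in\mathfrak{v}_1$) paired with suitable cross-term $\zeta$ then upgrade the earlier inclusions to $[e_{p+1}, \mathfrak{v}_1] = 0$, $[\mathfrak{v}_2, \mathfrak{v}_1] = 0$, and $[v_0, \mathfrak{v}_2] = 0$; in each case the component of the equation that produces the vanishing is independent of the auxiliary $\alpha$.

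\noindent\textbf{Closing (5) and the $2$-step conclusion.} After the previous step the only brackets potentially obstructing $[\mathfrak{a}, [\n,\n]] = 0$ are $\mu := [v_0, e_{p+1}]$ and $[\mathfrak{v}_1, \mathfrak{v}_1]$. Writing $\mu = \mu_R\, e_{p+1} + \mu_V$ with $\mu_V \in \mathfrak{v}_1$, the relations already established give $[\n, \mu] = \mu_R\,\mu$, so if $\mu_R \ne 0$ then the lower central series of $\n$ does not terminate; hence $\mu_R = 0$. Because the $\Ad(H)$-characters on $R$ and $\mathbb{R} v_0$ are reciprocal (the pairing $\langle v_0, e_{p+1}\rangle = 1$ is $\Ad(H)$-invariant), $\mu_V$ is $\Ad(H)$-fixed, and the previous vanishings place it in $Z(\n) \cap \mathfrak{v}_1$. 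To prove $\mu_V = 0$ and $[\mathfrak{v}_1, \mathfrak{v}_1] = 0$, I would apply the Geodesic Lemma to $\xi = v_0 + \mu_V$ (against $\zeta = e_{p+1}$ and $\zeta = \mu_V$) and to $\xi \in \mathfrak{v}_1$ (against $\zeta \in \mathfrak{v}_1$), and combine the resulting solvability constraints on $\alpha \in \h$ with the Jacobi identity and the $\Ad(H)$-fixedness of $\mu_V$. The hardest step of the entire argument is precisely this last one: ruling out a nontrivial $\Ad(H)$-fixed element of the form $[v_0, e_{p+1}]$ inside $\mathfrak{v}_1$ requires a delicate interplay between the character of $\Ad(H)$ on the null line $R$ and the skew-symmetric representation of $\h$ on the positive-definite $\mathfrak{v}_1$, visible only after the structural decomposition and the preceding Geodesic Lemma computations are fully in place. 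Once $\mu = 0$ and $[\mathfrak{v}_1, \mathfrak{v}_1] = 0$ are in hand, $[\n, [\n,\n]] = [\mathfrak{a}, [\n,\n]] + [[\n,\n], [\n,\n]] = 0$, giving simultaneously (5) and the $2$-step nilpotence of $\n$.
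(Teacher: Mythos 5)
Your structural setup and the bulk of the bracket vanishings are sound, and your route to $[v_0, \mathfrak{v}_2] = 0$ — applying the Geodesic Lemma to $\xi = v_0 + y$ as $y$ ranges over $\mathfrak{v}_1$, then reading off the $v_0$- and $y$-components separately — is a clean alternative to the paper's, which instead observes that $\ad(v)|_{\mathfrak{v}_1 + \mathfrak{w}}$ lies in $\so(p+1,1)$, is nilpotent, and has rank at most $1$, so must vanish by Lemma \ref{lemma4}. Your derivation that $\ad(\xi)|_{[\n,\n]}$ is skew-adjoint for the degenerate form and hence upper-triangular nilpotent is also correct.

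However, the final step — showing $\mu_V := [v_0, e_{p+1}]$ vanishes in $\mathfrak{v}_1$ — is a genuine gap, and you say so yourself. The Geodesic Lemma applications you propose for $\xi = v_0 + \mu_V$ do not close: with $\zeta = e_{p+1}$ the term $\langle [\alpha, e_{p+1}], v_0\rangle$ does not drop out (here $[\alpha, e_{p+1}] \in \mathbb{R}e_{p+1}$ and pairs nontrivially with $v_0$; and $\h$ need not be compact, so there is no a priori reason for this scalar to vanish or to be independent of $\alpha$), and with $\zeta = \mu_V$ you only learn $k\|\mu_V\|^2 = 0$. Nothing in that system forces $\|\mu_V\|^2 = 0$. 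You gesture at ``combining with the Jacobi identity,'' which is the right instinct but is left entirely unexecuted. The paper's actual argument is short and purely algebraic: since $\mathfrak{a}$ generates $\n$ and everything in $[\mathfrak{a}, [\n,\n]]$ has already been shown to land in $\mathfrak{v}_1$, some $[y,z]$ with $y,z \in \mathfrak{a}$ must carry a nonzero $e_{p+1}$-component; then $[v_0, [y,z]] = [[v_0,y],z] + [y,[v_0,z]] = 0$ because $[v_0, \mathfrak{a}] = 0$ was already established, and comparing with $[v_0, [y,z]] = a_{p+1}[v_0, e_{p+1}]$ forces $\mu_V = 0$. A minor further point: you also list $[\mathfrak{v}_1, \mathfrak{v}_1] = 0$ as something needing a separate proof, but once $[\mathfrak{a},[\n,\n]] = 0$ is in hand it follows automatically from $\mathfrak{a}$ generating $\n$ and the Jacobi identity, which is how the paper gets from (5) to the $2$-step conclusion.
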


\begin{proof}

Let $ \dim [\n, \n]=p+1$.  Since $\n$ is of Lorentz signature and
$[\n, \n]$ is degenerate, $\dim([\n, \n] \cap [\n, \n]^\perp) = 1$.
So we have $e_{p+1} \ne 0$ spanning $[\n, \n] \cap [\n, \n]^\perp$,
and $e_{p+1}^\perp = \mathfrak{v} + e_{p+1}\mathbb{R}$ where
$\mathfrak{v}$ is positive or negative definite.  Now $\mathfrak{v}
= \mathfrak{v}_1 + \mathfrak{v}_2$\,, $\Ad(H)$--invariant orthogonal
direct sum, where $\mathfrak{v}_1 = \mathfrak{v} \cap [\n, \n]$.
Thus $\mathfrak{w} := \mathfrak{v}^\perp$ is spanned by
$e_{p+1}$ and a null vector $v_0$ with $\langle e_{p+1},v_0\rangle = 1$
and $\Ad(H)v_0 \in v_0\mathbb{R}$.  Choose orthonormal bases
$\{e_1, \cdots , e_p\}$ of $\mathfrak{v_1}$ and $\{v_1, \cdots , v_s\}$
of $\mathfrak{v_2}$\,.  With those, we have constructed a basis of $\n$ that
satisfies conditions (1) through (4) above.
Note that the inner product on $\mathfrak{w}$ has matrix
$\left ( \begin{smallmatrix} 0 & 1 \\ 1 & 0 \end{smallmatrix} \right )$. So the metric on $\n$ has the matrix
\begin{equation*}
\langle \cdot, \cdot \rangle_{\n}=\left (\begin{smallmatrix}
                  I_p &{\mathbf 0} &{\mathbf 0} &{\mathbf 0}\\
                    {\mathbf 0} &0  &1  &{\mathbf 0} \\
                     {\mathbf 0} &1  &0  &{\mathbf 0} \\
                      {\mathbf 0} &{\mathbf 0}  &{\mathbf 0}  &I_s
                \end{smallmatrix} \right )
\end{equation*}
under the basis $\{e_1, e_2, \cdots , e_{p}; e_{p+1}, v_0; v_1, \cdots, v_s\}$. In particular, the metric on $[\n, \n]$ has the matrix $\langle \cdot, \cdot \rangle_{[\n, \n]}=\left (\begin{smallmatrix}
                  I_p & {\mathbf 0} \\
                    {\mathbf 0} &0
                \end{smallmatrix}\right )$ which is degenerate.
\smallskip

Let $x \in \n$.  Then $\ad(x)$ preserves $[\n,\n] = \mathfrak{v}_1 +
e_{p+1}\mathbb{R}$ and $\ad(x)|_{[\n,\n]}$
has matrix, relative to $\{e_1,\cdots,e_p;e_{p+1}\}$, of the form
$\left ( \begin{smallmatrix} A & B \\ C & d \end{smallmatrix} \right ) =
\left ( \begin{smallmatrix} A(x) & B(x) \\ C(x) & d(x) \end{smallmatrix}
\right ).$
\smallskip

First we consider $\ad(v_0)|_{[n,n]} = \left ( \begin{smallmatrix} A(v_0)
        & B(v_0) \\ C(v_0) & d(v_0) \end{smallmatrix} \right )$.
By the Geodesic Lemma, there exists $a_{v_0} \in \mathfrak{h}$
such that $\langle [v_0 +a_{v_0}, e_i], v_0 \rangle = k\langle v_0, e_i
\rangle=0$ for $1 \le i \le p.$
  Since $H$ is completely reducible on
$\mathfrak{g}$ we have $[a_{v_0}, e_i] \in \mathfrak{v}_1$\,.  Now
$\langle [a_{v_0}, e_i] , v_0\rangle=0$,
so $$\langle [v_0, e_i], v_0\rangle=0.$$ Now $[v_0, e_i]=
C_i(v_0)e_{p+1}+\sum_{j=1}^p a_{ji}e_j$ for any $1\leq i\leq p$.
So $$\langle [v_0, e_i], v_0\rangle=\langle C_i(v_0)e_{p+1}, v_0\rangle
=C_i(v_0), \quad 1\leq i\leq p.$$
It forces $C_i(v_0) =0$, i.e. $C(v_0) = \mathbf{0}$. Now $\ad(v_0)|_{[n,n]} = \left ( \begin{smallmatrix} A(v_0)
        & B(v_0) \\ {\mathbf 0} & d(v_0) \end{smallmatrix} \right )$.
Furthermore, for any $e\in \mathfrak{v_1}$, by the Geodesic Lemma, there exists
$a_e \in \mathfrak{h}$ such that
$
\langle [e +a_e, v_0], e \rangle = k\langle v_0, e \rangle.
$
Since $v_0$ is a one dimensional submodule, we know $\langle [a_e, v_0], e \rangle=0$. Hence
$
\langle [e, v_0], e \rangle=0$. It follows that $$\langle [v_0,e_i],e_j\rangle+\langle e_i, [v_0,e_j]\rangle=0, \quad 1 \le i,j \le p.$$
Then we have $a_{ij}+a_{ji}=0$,
that says $A(v_0)^t=-A(v_0)$. Since $\ad(v_0)$ is nilpotent, we have
$A(v_0)=\mathbf{0}$ and $d(v_0)=0$. Thus,
\begin{equation*}
\ad (v_0)|_{[\n,\n]}=  \left ( \begin{smallmatrix} \mathbf{0} &B(v_0) \\
                     \mathbf{0}  &0  \end{smallmatrix} \right ).
\end{equation*}

Next consider $\ad(v)|_{[n,n]} = \left ( \begin{smallmatrix} A(v)
        & B(v) \\ C(v) & d(v) \end{smallmatrix} \right )$ for any $v \in \mathfrak{v}_2$. We write equation (\ref{051}) as
$$\left ( \begin{smallmatrix} A(v)^t &C(v)^t \\ B(v)^t  &d \end{smallmatrix} \right )
\left ( \begin{smallmatrix} I_p & \mathbf{0} \\
		\mathbf{0}  & 0 \end{smallmatrix} \right ) +
\left ( \begin{smallmatrix} I_p & \mathbf{0} \\
		\mathbf{0}  & 0 \end{smallmatrix} \right )
\left ( \begin{smallmatrix} A(v) &B(v) \\ C(v)  &d \end{smallmatrix} \right )=0.$$
It follows that $A(v)^t = -A(v)$ and $B(v) = \mathbf{0}$.
Since $\ad (v)|_{[\n, \n]}$ is nilpotent, now we have $A(v)=\mathbf{0}$ and $d=0$, so
$\ad (v)|_{[\n, \n]}=
\left ( \begin{smallmatrix} \mathbf{0} & \mathbf{0} \\
                     C(v)  & 0  \end{smallmatrix} \right ).$
We now apply the Geodesic Lemma to $\ad(v+v_0)|_{[\n,\n]}$.
That gives us $a_{v+v_0} \in \mathfrak{h}$ such that
$\langle [v + v_0 + a_{v+v_0}, e_i],v + v_0 \rangle =
k\langle v + v_0,e_i\rangle = 0$ for $1\le i \le p.$
  Since
$[a_{v+v_0},e_i] \in [\mathfrak{h},\mathfrak{v}_1] \subset \mathfrak{v}_1$\,, it follows that
$$\langle [v+v_0,e_i],v+v_0\rangle = 0.$$ On the other hand, since $[v_0,e_i]=0$ for any $1\le i \le p$, we have
\begin{eqnarray*}
\langle [v+v_0,e_i],v+v_0\rangle & = & \langle [v,e_i],v+v_0\rangle = \langle C_i(v)e_{p+1},v+v_0\rangle \\
& = & \langle C_i(v)e_{p+1},v_0\rangle \\
&= & C_i(v).
\end{eqnarray*}
This forces $C_i(v) =0$, i.e. $C(v) = \mathbf{0}$. Thus
$\ad(v)|_{[\n,\n]} = 0$.
\smallskip

Furthermore for any $v \in \mathfrak{v}_2$, first we get 
$\ad (v)|_{\mathfrak{v_1} + \mathfrak{w}}=  
\left ( \begin{smallmatrix} \mathbf{0} & B_1(v) \\
                     \mathbf{0}  &0  \end{smallmatrix} \right )$
in the basis $\{e_1,\cdots,e_{p+1},v_0\}$ since $\ad (v)|_{[\n,\n]}= 0$.
By the Geodesic Lemma to $\ad(v)|_{\mathfrak{v_1} + \mathfrak{w}}$ and the 
fact that $\mathfrak{v}_2$ and $\mathfrak{v_1} + \mathfrak{w}$ are 
$\Ad(H)$-invariant,  we know 
$\ad (v)|_{\mathfrak{v_1} + \mathfrak{w}}\in \so(p+1, 1)$. 
By Lemma \ref{lemma4}, we have $\ad (v)|_{\mathfrak{v_1} + \mathfrak{w}}= 0$. 
That is, $[v,v_0]=0$ for any $v\in \mathfrak{v}_2$, then for any $v\in\mathfrak a$.
\smallskip

Since $\mathfrak a$ generates $\n$, $\ad(v)|_{[\n,\n]} = 0$ for any $v \in \mathfrak{v}_2$, and $
\ad (v_0)|_{[\n,\n]}=  \left ( \begin{smallmatrix} \mathbf{0} &B(v_0) \\
                     \mathbf{0}  &0  \end{smallmatrix} \right )$, there exist $y, z \in \mathfrak a$ such that $[y, z]=\sum^{p+1}_{i=1} a_i e_i$ with $a_{p+1}\ne 0$. Then $$[v_0, [y, z]]=a_{p+1}[v_0,e_{p+1}]=a_{p+1}\sum_{i=1}^pB_i(v_0)e_i.$$ From the Jacobi Identity and the fact $[v,v_0]=0$ for any $v\in\mathfrak a$, we have
                $$[v_0, [y, z]]=[[v_0, y], z]+[y, [v_0, z]]=0,$$
it forces $B(v_0)=\mathbf{0}$. That is $\ad (v_0)|_{[\n, \n]}= 0$.
\smallskip

Now we know $\ad (x)|_{[\n, \n]}= 0$ for any $x\in \mathfrak a$, and thus
also for any $x \in \n$. Thus $\n$ is at most $2$-step nilpotent since
$\mathfrak a$ generates $\n$.
\end{proof}

\begin{remark}\label{wk-symm-case}
By Proposition \ref{propnat}, Theorem \ref{th41} holds in particular 
where $(M,\langle\,,\,\rangle)$ is a naturally reductive Lorentz nilmanifold.  
By Proposition \ref{prop41}, Theorem \ref{th42} holds in particular
where $(M,\langle\,,\,\rangle)$ is a connected  weakly symmetric Lorentz 
nilmanifold with $G ={I(M)}^0${\rm )}.
\hfill $\diamondsuit$
\end{remark}

\section{Acknowledgements} ZC was partially supported by National Natural 
Science Foundation of China (11931009 and 12131012) and Natural Science Foundation of 
Tianjin (19JCYBJC30600). JAW was partially supported by the Simons Foundation.
SZ was partially supported by National Natural Science Foundation of China 
(12071228 and 51535008), and SZ thanks the China Scholarship Council for 
support at University of California, Berkeley and he also thanks U. C. 
Berkeley for hospitality.

\end{document}